\documentclass[11pt,reqno]{amsart}
\usepackage{amsmath,amssymb,amsfonts,amsthm,amscd,amstext,amsxtra,amsopn,array,url,verbatim,mathrsfs}
\usepackage{graphicx}
\usepackage{amsmath,amssymb,amsfonts,amsthm,amssymb,amscd,amstext,amsxtra,amsopn,array,url,verbatim,mathrsfs}

\usepackage{url}
\usepackage{booktabs}

\usepackage{longtable}

\usepackage{fullpage}
\usepackage{times}
\usepackage{dsfont}

\makeatletter
\@namedef{subjclassname@2010}{%
  \textup{2010} Mathematics Subject Classification}
\makeatother

\newtheorem{theorem}{Theorem}[section]
\newtheorem{lemma}[theorem]{Lemma}
\newtheorem{prop}[theorem]{Proposition}
\newtheorem{corollary}[theorem]{Corollary}
\numberwithin{equation}{section}

\theoremstyle{remark}

\renewcommand{\mod}[1]{{\ifmmode\text{\rm\ (mod~$#1$)}\else\discretionary{}{}{\hbox{ }}\rm(mod~$#1$)\fi}}

\newcommand{\N}{{\mathbb{N}}}
\renewcommand{\Re}{{\mathfrak{Re}}}

\renewcommand{\b}{\beta}
\newcommand{\g}{\gamma}

\renewcommand{\t}{\theta}
\newcommand{\s}{\sigma}
\renewcommand{\d}{\delta}

\vfuzz=2pt

\begin{document}

\title{Short effective intervals containing primes.}
\author[Habiba Kadiri]{Habiba Kadiri}
\address{Department of Mathematics and Computer Science, University of Lethbridge, 4401 University Drive, Lethbridge, Alberta, T1K 3M4 Canada}
\email{habiba.kadiri@uleth.ca}
\author[Allysa Lumley]{Allysa Lumley}
\address{Department of Mathematics and Computer Science, University of Lethbridge, 4401 University Drive, Lethbridge, Alberta, T1K 3M4 Canada}
\email{allysa.lumley@uleth.ca}
\thanks{Our calculations were done on the University of Lethbridge Number Theory Group Eudoxus machine, supported by an NSERC RTI grant.}
\subjclass[2010]{11M06, 11M26}
\keywords{\noindent prime numbers, short intervals, zeros of Riemann zeta function, Goldbach conjecture}

\begin{abstract}
We prove that if $x$ is large enough, namely $x\ge x_0$, then there exists a prime between $x(1- \Delta^{-1})$ and $x$, where $\Delta$ is an effective constant computed in terms of $x_0$.
\end{abstract}

\maketitle
\section{Introduction.}\label{intro}
In this article, we address the problem of finding short intervals containing primes.
In $1845$ Bertrand conjectured that for any integer $n > 3$, there always exists at least one prime number $p$ with $n < p < 2n - 2$.
This was proven by Chebyshev in $1850$, using elementary methods.
Since then other intervals of the form $(kn,(k+1)n)$ have been investigated.
We refer the reader to \cite{Ba} for $k=2$, and to \cite{Loo} for $k=3$. 
Assuming that $x$ is arbitrarily large, the length of intervals containing primes can be drastically reduced.
To date, the record is held by Baker, Harman, and Pintz \cite{BHP} as they prove that there is at least one prime between $x$ and $x+x^{0.525+\varepsilon}$. 
This is an impressive result since under the Riemann Hypothesis the exponent $0.525$ can only be reduced to $0.5$. 
On the other hand, maximal gaps for the first primes have been checked numerically up to $4\cdot 10^{18}$ by Oliveira e Silva et al. \cite{SilHerPar}. 
In particular, they find that the largest prime gap before this limit is $1\,476$ and occurs at $ 1\,425\,172\,824\,437\,699\,411 =  e^{41.8008\ldots}$.
The purpose of this article is to obtain an effective result of the form:
for all $x\ge x_0$, there exists $\Delta>0$ such that the interval $(x(1-\Delta^{-1}),x)$ contains at least one prime.
In $1976$ Schoenfeld's \cite[Theorem 12]{Sh} gave this for $x_0= 2\,010\,881.1 $ and $\Delta=16\,598$.
In $2003$ Ramar\'e and Saouter improved on Schoenfeld's method by using a smoothing argument.
They also extended the computations to many other values for $x_0$ (\cite[Theorem 2 and Table 1]{RaSa}).
In \cite{Kad3}, the first author generalized this theorem to primes in arithmetic progression and applied this to Waring's seven cubes problem.
Here, our theorem improves \cite{RaSa} by making use of a new explicit zero-density for the zeros of the Riemann zeta function:
\begin{theorem} \label{main-thm} 
Let $x_0\ge 4\cdot 10^{18}$ be a fixed constant and let $x> x_0$.
Then there exists at least one prime $p$ such that $ (1-\Delta^{-1})x < p < x$, where $\Delta$ is  a constant depending on $x_0$ and is given in Table \ref{Table}.  
\end{theorem}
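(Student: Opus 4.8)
The plan is to deduce the existence of a prime in $\big((1-\Delta^{-1})x,x\big)$ from the strict positivity of a \emph{smoothed} weighted prime-counting sum
\[
 S \;=\; \sum_{n\ge 1}\Lambda(n)\,f(n),
\]
where $f=f_{x,\Delta}\ge 0$ is a smooth weight supported inside $\big((1-\Delta^{-1})x,x\big)$, chosen as in the smoothing method of Ramar\'e and Saouter but optimised against the new zero-density input. Indeed, writing $S=\sum_{p}(\log p)f(p)+\sum_{k\ge 2}\sum_{p}(\log p)f(p^{k})$ and using the crude bound $\sum_{k\ge 2}\sum_{p}(\log p)f(p^{k})\ll \sqrt{x}\log x$ for the prime-power terms, if $S$ exceeds this crude bound then $\sum_{p}(\log p)f(p)>0$, which forces a genuine prime in the support of $f$.

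\emph{Weight and explicit formula.} I would take $f$ to be an $m$-fold self-convolution of indicator functions of short subintervals near $x$ (for a small integer $m$), rescaled so that $\operatorname{supp} f\subset\big((1-\Delta^{-1})x,x\big)$ and $\int_{0}^{\infty}f=c_{m}\,\Delta^{-1}x$ with $c_{m}\in(0,1]$ explicit. Its Mellin transform $\widetilde f(s)=\int_{0}^{\infty}f(t)\,t^{s-1}\,dt$ is then an explicit product of factors of the shape $\dfrac{b_{j}^{\,s}-a_{j}^{\,s}}{s\log(b_{j}/a_{j})}$, decaying like $|s|^{-m}$ on vertical lines. The Riemann--von Mangoldt explicit formula then gives
\[
 S \;=\; \widetilde f(1)\;-\;\sum_{\rho}\widetilde f(\rho)\;+\;E_{0},
\]
where $\rho=\beta+i\gamma$ runs over the nontrivial zeros of $\zeta$, the main term is $\widetilde f(1)=\int_{0}^{\infty}f\asymp x/\Delta$, and $E_{0}$ collects the negligible contributions of the pole at $s=0$ and of the trivial zeros.

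\emph{Estimating the sum over zeros.} I would split $\sum_{\rho}|\widetilde f(\rho)|$ at the height $H$ up to which the Riemann Hypothesis has been numerically verified. For $|\gamma|\le H$ one has $\beta=\tfrac12$, so $|\widetilde f(\rho)|$ is controlled by the decay of $\widetilde f$ on the line $\Re s=\tfrac12$ together with a bound on the number of low-lying zeros. For $|\gamma|>H$ the two ingredients are: (i) an explicit zero-free region $\beta\le 1-\eta(|\gamma|)$, which bounds $|\widetilde f(\rho)|$ by roughly $x^{1-\eta(|\gamma|)}|\gamma|^{-m}$ up to explicit logarithmic factors; and (ii) the paper's new explicit zero-density estimate for $N(\sigma,T)$, which controls how many zeros lie in each horizontal strip. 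Partial summation over the zeros against these two estimates then yields an explicit bound $\sum_{|\gamma|>H}|\widetilde f(\rho)|\le \mathcal E(x_{0},\Delta)$, valid uniformly for all $x\ge x_{0}$.

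\emph{Conclusion and the main difficulty.} Putting everything together, $S>0$ with room to spare for the prime powers is guaranteed as soon as
\[
 \widetilde f(1)\;>\;\sum_{|\gamma|\le H}|\widetilde f(\rho)|\;+\;\mathcal E(x_{0},\Delta)\;+\;|E_{0}|\;+\;C\sqrt{x_{0}}\,\log x_{0}.
\]
For each $x_{0}$ appearing in Table~\ref{Table} I would then select the smoothing order $m$, the smoothing width, and the constant $\Delta$ so that this inequality holds with the smallest admissible $\Delta$, i.e.\ the shortest interval. The main obstacle is the tail bound for $|\gamma|>H$: one must combine the explicit zero-free region, the new explicit density estimate, and the precise decay of $\widetilde f$ while keeping every constant under control, since the quality of the final $\Delta$ is extremely sensitive to all three inputs --- this is exactly where the improvement over Ramar\'e--Saouter is gained. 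A secondary difficulty is the numerical optimisation underlying the table: the parameters $m$, the smoothing width, $H$ and $\Delta$ interact, so each entry requires solving the balancing inequality above essentially from scratch.
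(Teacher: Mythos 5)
Your outline is correct in its essentials and follows the same broad strategy as the paper: smooth the prime-counting sum, invoke the Riemann--von Mangoldt explicit formula, and bound the sum over zeros by combining the numerical verification of RH up to a height $H$, an explicit zero-free region, and the new explicit zero-density estimate for $N(\sigma,T)$. Two details, however, differ from what the paper actually does, and both are precisely where the paper gains its constants. First, you insist that $\operatorname{supp} f\subset\big((1-\Delta^{-1})x,x\big)$ so that positivity of $\sum_p(\log p)f(p)$ directly forces a prime. The paper instead smooths the \emph{endpoints} of the difference $\theta(x)-\theta(y)$: it integrates $\theta(e^uX(1+\delta t))-\theta(X(1+\delta t))$ against $f(t)$ over all $t\in(0,1)$, which amounts to an effective weight whose support slightly \emph{overflows} $[y,x]$, and then controls the overflow on $t\in(0,a)\cup(1-a,1)$ with a Brun--Titchmarsh inequality. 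This extra degree of freedom (the parameter $a$) is genuinely useful and your constraint on the support would give a strictly smaller $\Delta$ for the same inputs. Second, you bound the prime-power contribution crudely by $O(\sqrt{x}\log x)$; the paper uses Costa's explicit two-sided bounds on $\psi(x)-\theta(x)$ to get a cleanly quantified error of the form $\omega\sqrt{X}$ with $\omega\approx 2.05\cdot10^{-3}$, and since at the scale of the table entries the $\sqrt{x}$ term is not negligible relative to the main term, the unquantified constant in your $O(\cdot)$ would have to be pinned down anyway. Two smaller points of divergence: the paper uses the weight $f_m(t)=(4t(1-t))^m$ (linked to shifted Legendre polynomials, giving clean closed forms for the quantities $F_{k,m,\delta}$) rather than an $m$-fold self-convolution of indicators, and it splits the sum over zeros below $H$ further at heights $T_0$ and $T_1$, exploiting a directly computed value of $\sum_{\gamma\le T_0}\gamma^{-1}$ and the Rosser-type bound on $N(T)$ in the intermediate range, rather than treating the whole range $|\gamma|\le H$ in one block. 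None of these is a logical gap --- your plan would yield a theorem of the same shape --- but they are the refinements on which the numerical quality of Table~\ref{Table} rests.
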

In Section \ref{proof}, we prove a general theorem (Theorem \ref{1st-main-thm}) which provides conditions for intervals of the form $((1-\Delta^{-1})x , x)$ to contain a prime.
In Section \ref{Computations}, we apply this theorem to compute explicit values for $\Delta$.

We present an example of numerical improvement this theorem allows, for instance when $x_0=e^{59}$.
Ramar\'e and Saouter \cite{RaSa} found that the interval gap was given by $\Delta = 209\,257\,759$.
In \cite[page 74]{HH1}, Helfgott mentioned an improvement of Ramar\'e using Platt's latest verification of the Riemann Hypothesis \cite{Pla0}: $\Delta =307\,779\,681$.
Our Theorem \ref{main-thm} leads to $\Delta =1~946~282~821$.

We now mention an application to the verification of the Ternary Goldbach conjecture.
This conjecture was known to be true for sufficiently large integers (by Vinogradov), and  Liu and Wang \cite{LiuWang} prove it for all integers $n \ge e^{3100}$.
On the other hand, the conjecture was verified for the first values of $n$. 
In \cite[Corollary 1]{RaSa}, Ramar\'e and Saouter verified it for $n\le 1.132\cdot 10^{22}$. 
Very recently, Oliveira e Silva et. al. \cite[Theorem 2.1]{SilHerPar} extended this limit to $ n\le 8.370\cdot 10^{26}$.
In \cite[Proposition A.1.]{HH1}, Helfgott applied the above result on short intervals containing primes ($\Delta=307\,779\,681$) and found $n \le 1.231\cdot10^{27}$. 
This allowed him to complete his proof \cite{HH1} \cite{HH2} of the Ternary Goldbach conjecture for the remaining integers.
Here our main theorem gives:
\begin{corollary}\label{Goldbach}
Every odd number larger than $5$ and smaller than 
\[1~966~196~911\times 4\cdot10^{18}= 7.864\ldots \cdot10^{27}\]
is the sum of at most three primes. 
\end{corollary}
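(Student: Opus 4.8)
The plan is to combine Theorem~\ref{main-thm}, specialised to $x_0 = N_1 := 4\cdot 10^{18}$, with the verification of the binary Goldbach conjecture. Concretely, I will use that every even integer $m$ with $4 \le m \le N_1$ is a sum of two primes, as established computationally by Oliveira e Silva, Herzog and Pardi \cite{SilHerPar}. From Table~\ref{Table} with $x_0 = N_1$ one reads the constant $\Delta = 1\,966\,196\,911$, so that the bound appearing in the statement is exactly $N_2 := \Delta N_1$.

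First I would dispose of the small odd integers. If $n$ is odd with $7 \le n \le N_1 + 3$, then $m := n - 3$ is an even integer with $4 \le m \le N_1$, hence a sum of two primes by \cite{SilHerPar}, and therefore $n = 3 + m$ is a sum of at most three primes. (Alternatively, one may simply invoke the direct verification of the ternary statement for $n \le 8.37\cdot 10^{26}$ from the same reference.) It remains to treat odd $n$ with $N_1 < n < N_2$.

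For such $n$, since $n > x_0 = N_1$, Theorem~\ref{main-thm} provides a prime $p$ with $(1-\Delta^{-1})n < p < n$. The lower endpoint $(1-\Delta^{-1})n$ is far larger than $2$, so $p$ is an odd prime and $m := n - p$ is a positive even integer; moreover $m = n - p < \Delta^{-1} n < \Delta^{-1} N_2 = N_1$. If $m = 2$, then $n = p + 2$ is a sum of two primes; if $4 \le m \le N_1$, then $m$ is a sum of two primes by \cite{SilHerPar}, so $n = p + m$ is a sum of three primes. In all cases $n$ is a sum of at most three primes, which together with the previous paragraph yields the corollary.

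I do not expect a genuinely hard step: this is the classical reduction of ternary Goldbach on a finite range to a binary Goldbach verification combined with an explicit short-interval result, and Theorem~\ref{main-thm} supplies the latter with prime gaps of length $< x/\Delta$. The only delicate points are bookkeeping — confirming that the $\Delta$ attached to $x_0 = 4\cdot 10^{18}$ is at least $1\,966\,196\,911$ so that the ranges $7 \le n \le N_1+3$ and $N_1 < n < N_2$ leave no odd integer uncovered, checking that $p \neq 2$ so that $n-p$ is even (and hence reachable by the binary data), and keeping track of strict versus non-strict inequalities at the endpoints $m \in \{2, 4, N_1\}$ and $n = N_2$.
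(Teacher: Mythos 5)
Your overall strategy is the same as the paper's: reduce the ternary statement on $(N_1, N_2)$ to the binary Goldbach verification of \cite{SilHerPar} up to $N_1 = 4\cdot 10^{18}$, by producing a prime $p$ within distance $N_1$ of each odd $n$ and then decomposing the even remainder $n-p$. However, there is a genuine error in how you extract $\Delta$ from Table~\ref{Table}. The row $x_0 = 4\cdot 10^{18}$ gives $\Delta = 36\,082\,898$, \emph{not} $1\,966\,196\,911$; the latter is the entry for $x_0 = e^{60}\approx 1.14\cdot 10^{26}$. This matters in two ways. First, invoking Theorem~\ref{main-thm} with $\Delta = 1\,966\,196\,911$ for all $n > N_1$ is not justified: that value of $\Delta$ is only guaranteed for $n > e^{60}$, so your argument says nothing for $n$ in $(N_1, e^{60})$ beyond the small cases $n\le N_1+3$. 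Second, if you instead use the $\Delta$ the table actually assigns to $x_0=N_1$, the bound $m=n-p < n/\Delta$ near $n\approx N_2$ becomes roughly $N_2/36\,082\,898 \approx 2.2\cdot 10^{20}$, far exceeding $N_1$, so $m$ is not covered by the binary verification and the reduction collapses.

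The fix, which is what the paper does (tersely), is to use the \emph{full} table with a range-dependent $\Delta$: for $n$ in $(e^{k}, e^{k+1}]$ with $43\le k\le 59$ use the row $x_0=e^k$, and for $n\in(e^{60}, N_2)$ use $x_0=e^{60}$. One then checks row by row that $e^{k+1}/\Delta_k < N_1$ (all values are $\ll 10^{17}$), and that for $n<N_2$ one has $n/\Delta_{60} < N_2/\Delta_{60} = N_1$; the case $n\le N_1$ is handled directly by the prime-gap computation of \cite{SilHerPar}. Alternatively, your parenthetical remark points to a cleaner repair: invoke the direct ternary verification up to $8.370\cdot 10^{26} > e^{60}$ from \cite{SilHerPar}, and then for odd $n$ in $(8.370\cdot 10^{26}, N_2)$ apply Theorem~\ref{main-thm} with $x_0 = e^{60}$ and $\Delta = \Delta_{60} = 1\,966\,196\,911$; this single row now suffices since $n>e^{60}$ and $n/\Delta_{60}<N_1$. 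Either way, the attribution ``$\Delta=1\,966\,196\,911$ for $x_0 = 4\cdot 10^{18}$'' must be corrected. Your careful case split on $m\in\{2\}\cup\{4,\dots,N_1\}$, the check that $p$ is odd, and the treatment of $7\le n\le N_1+3$ via $n-3$ are all fine and in fact slightly more explicit than the paper's own sketch.
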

As of today, Helfgott and Platt \cite{HP} have announced a verification up to $8.875\cdot10^{30}$.

\section{Proof of Theorem \ref{main-thm}} \label{proof}
We recall the definition of the classical Chebyshev functions:
\[
\theta(x) = \sum_{p\le x} \log p,\quad
\psi(x) = \sum_{n\le x} \Lambda (n),\ 
\text{with}\ \Lambda(n) = 
\begin{cases}
1 & \text{ if } n = p^k \text{ for some } k\in\N, \\
0 & \text{ otherwise}.
\end{cases}
\]
For each $x_0$, we want to find the largest $\Delta>0$ such that, for all $x>x_0$, there exists a prime between $x(1-\Delta^{-1})$ and $x$. 
This happens as soon as 
\[
\theta(x) - \theta(x(1-\Delta^{-1})) > 0.
\]
\subsection{Introduction of parameters} 
We list here the parameters we will be using throughout the proof.
\begin{equation}
\begin{split}
\label{cond}
*\ & m \text{ integer with } m\ge2,\\ 
*\ & 0\le u \le 0.0001,\ 
 \d = mu \ \text{and}\  0\le \d \le 0.0001 ,\\
*\ & 0\le a\le 1/2 ,\\
*\ & \Delta = \left( 1- (1+\d a) (1+\d(1-a))^{-1} e^{-u}\right)^{-1},\\
*\ & X \ge X_0\ge e^{38},\\
*\ & x=e^uX(1+\d(1-a)) \ge x_0 = e^uX_0(1+\d(1-a)),\\
*\ & y = X(1+\d a) =x\left(1-\Delta^{-1}\right) .
\end{split}
\end{equation}
\subsection{Smoothing the difference $\theta(x)-\theta(y)$} 
We follow here the smoothing argument of \cite{RaSa}.
Let $f$ be a positive function integrable on $(0,1)$. We denote
\begin{align}
& \label{def-norm1f}
 \|f\|_1 = \int_0^1 f(t) dt , \\
& \label{def-nufa}
 \nu(f,a) = \int_0^a f(t) dt +\int_{1-a}^1 f(t) dt,
\\& 
\label{def-I}
\text{and}\ 
I_{\d,u,X} = \frac1{\|f\|_1}\int_{0}^{1} \left(  \theta(e^uX(1+\d t) )-\theta(X(1+\d t) )\right) f(t) dt  .
\end{align}
Note that for all $a \le t \le 1-a$, $ \theta(e^uX(1+\d t) )-\theta(X(1+\d t) ) \le \theta(x)-\theta(y)$. 
We integrate with the positive weight $f$ and obtain:
\begin{equation}\label{ineq-2}
\int_{a}^{1-a} \left(  \theta(e^uX(1+\d t) )-\theta(X(1+\d t) )\right) f(t) dt \le \left(\theta(x)-\theta(y)\right) \int_{a}^{1-a} f(t) dt.
\end{equation}
We extend the left integral to the interval $(0,1)$ and use a Brun-Titchmarsh inequality to control the primes on the extremities $(0,a)$ and $(1-a,1)$ of the interval (see \cite[page 16, line -5]{RaSa} or \cite[Theorem 2]{MV}):
\begin{multline}\label{ineq-3}
\int_{t\in (0,a)\cup(1-a,1)} \left(  \theta(e^uX(1+\d t) )-\theta(X(1+\d t) )\right) f(t) dt 
\\ \le 2(1+\d )(e^u-1)\frac{\log(e^uX)}{\log(X(e^u-1))} \nu(f,a) X.
\end{multline}
Note that \cite{RaSa} uses the slightly larger bound
\[ 
2.0004 u \frac{\log X}{\log(uX)} \nu(f,a)  X .
\]
Combining \eqref{ineq-2} and \eqref{ineq-3} gives for $I_{\d,u,X}$:
\begin{equation}\label{ineq-23}
I_{\d,u,X} \le \left(\theta(x)-\theta(y)\right) \frac{\int_{a}^{1-a} f(t) dt}{\|f\|_1} + 2(1+\d )(e^u-1)\frac{\log(e^uX(1+\d ))}{\log(X(e^u-1))} \frac{ \nu(f,a) }{\|f\|_1}   X.
\end{equation}
Thus $\theta(x)-\theta(y)>0$ when
\begin{equation}\label{ineq-4}
I_{\d,u,X} - 2(1+\d )(e^u-1)\frac{\log(e^uX(1+\d ))}{\log(X(e^u-1))} \frac{ \nu(f,a) }{\|f\|_1} X>0.
\end{equation}
It remains to establish a lower bound for $I_{\d,u,X}$.
To do so, we first approximate $\theta(x)$ with $\psi(x)$.
This will allow us to translate our problem in terms of the zeros of the zeta function.
We use approximations proven by Costa in \cite[Theorem 5]{Costa}:
\begin{lemma}
\label{ineq-psi-theta} 
Let $x\ge e^{38}$. Then
\begin{equation}
 0.999\sqrt{x}+\sqrt[3]{x}<\psi(x)-\theta(x) <1.001\sqrt{x}+\sqrt[3]{x}.
\end{equation}
\end{lemma}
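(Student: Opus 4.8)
The plan is to reduce the lemma to explicit estimates for the Chebyshev function $\theta$ at the arguments $\sqrt{x}$ and $\sqrt[3]{x}$. Writing $\psi(x)=\sum_{p^{k}\le x}\log p=\sum_{k\ge 1}\theta\bigl(x^{1/k}\bigr)$ and removing the $k=1$ term gives the exact finite identity
\[
\psi(x)-\theta(x)=\sum_{k\ge 2}\theta\bigl(x^{1/k}\bigr)=\theta\bigl(\sqrt{x}\bigr)+\theta\bigl(\sqrt[3]{x}\bigr)+R(x),\qquad R(x):=\sum_{k\ge 4}\theta\bigl(x^{1/k}\bigr)\ge 0,
\]
the sum being finite since $\theta(t)=0$ for $t<2$. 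It therefore suffices to show that
\[
D(x):=\bigl(\theta(\sqrt{x})-\sqrt{x}\bigr)+\bigl(\theta(\sqrt[3]{x})-\sqrt[3]{x}\bigr)+R(x)
\]
satisfies $-0.001\sqrt{x}<D(x)<0.001\sqrt{x}$; because $R(x)\ge 0$ the lower bound has room to spare, so only the upper bound is genuinely tight.

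First I would dispose of the tail. For $k\ge 4$ one has $\theta(x^{1/k})\le 1.02\,x^{1/k}$ (Rosser--Schoenfeld), and $\sum_{k\ge 4}x^{1/k}$ is dominated by its leading term $x^{1/4}$ up to a bounded factor, so $0\le R(x)\ll x^{1/4}=x^{-1/4}\sqrt{x}$. Since $x\ge e^{38}$ makes $x^{-1/4}\le e^{-19/2}$, the tail is below $10^{-4}\sqrt{x}$ throughout the range; in particular it is even smaller than $\sqrt[3]{x}$ itself.

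Next come the two main terms, where the asymmetry in difficulty is the point. The term $\theta(\sqrt[3]{x})$ is harmless: it occurs with coefficient $1$ on both sides of the claimed inequality, so any classical two--sided bound $(1-\eta)t<\theta(t)<(1+\eta)t$ with a moderate $\eta$ at $t=\sqrt[3]{x}\ge e^{38/3}$ keeps $\theta(\sqrt[3]{x})-\sqrt[3]{x}$ of size $O(x^{1/3})=O(x^{-1/6}\sqrt{x})$, which is tiny next to $0.001\sqrt{x}$. The delicate term, and the step I expect to be the main obstacle, is $\theta(\sqrt{x})$: one needs $|\theta(t)-t|$ to be a genuinely small fraction of $t$, safely under $10^{-3}t$, for \emph{every} $t\ge e^{19}$, whereas the classical Rosser--Schoenfeld inequality $|\theta(t)-t|<0.024\,t/\log t$ is too weak here, yielding only about $1.3\cdot 10^{-3}$ at $t=e^{19}$. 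I would instead invoke a sharper explicit prime number theorem, for instance Dusart's estimate $|\theta(t)-t|<0.0078\,t/\log t$ for $t\ge 1.1\cdot 10^{7}$, which gives $|\theta(\sqrt{x})-\sqrt{x}|<4.2\cdot 10^{-4}\sqrt{x}$ at $x=e^{38}$ with a relative error that decreases as $x$ grows; for the upper direction alone one may instead use $\theta(t)<t$ in the range covered by the numerical verification of the Riemann Hypothesis (Platt, B\"uthe). This is the step in which the threshold $e^{38}$ and the constants $0.999$ and $1.001$ are actually pinned down.

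Finally I would assemble the three bounds in $D(x)$: even at the worst case $x=e^{38}$ the three contributions are at most $4.2\cdot 10^{-4}\sqrt{x}$, $O(x^{-1/6}\sqrt{x})$ and $O(x^{-1/4}\sqrt{x})$, whose sum stays strictly below $10^{-3}\sqrt{x}$, and the same bound with the opposite sign gives the lower estimate (where $R(x)\ge 0$ only helps). Since the relative size of each of the three pieces is decreasing in $x$, the conclusion extends to all $x\ge e^{38}$. A ready-made packaging of precisely this computation is what is cited above as Costa's Theorem~5.
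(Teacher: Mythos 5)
The paper gives no proof of this lemma: it is quoted directly from Costa Pereira's Theorem~5, which the authors cite and do not re-derive. Your reconstruction is correct and is essentially the route Costa himself must have followed, via the identity $\psi(x)-\theta(x)=\sum_{k\ge2}\theta\bigl(x^{1/k}\bigr)$. Your estimates check out at the critical endpoint $x=e^{38}$: the finite tail $\sum_{k\ge4}\theta\bigl(x^{1/k}\bigr)$ has roughly $\log_2 x\approx 55$ terms, is dominated by $x^{1/4}=e^{9.5}$, and stays under $10^{-4}\sqrt{x}$; the cube-root term contributes at most the relative error of $\theta$ at $e^{19/3}\approx 3\cdot 10^{5}$ (a few parts in a thousand) times $x^{-1/6}\approx 1.8\cdot 10^{-3}$, hence negligible; and a Dusart-type bound $|\theta(t)-t|<0.0078\,t/\log t$ gives about $4.1\cdot 10^{-4}\sqrt{x}$ at $t=\sqrt{x}=e^{19}$. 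Summing the three pieces lands comfortably below $10^{-3}\sqrt{x}$, with slack that only improves as $x$ grows, so both inequalities follow. You are also right to single out the $\theta(\sqrt{x})$ term as the bottleneck and to note that the Rosser--Schoenfeld constant $0.024$ alone is inadequate at $t=e^{19}$; pinning that term down is precisely what fixes the threshold $e^{38}$ and the $\pm0.001$. The one anachronism is that Dusart's estimates and Platt's RH verification postdate Costa's 1985 paper, so Costa necessarily relied on the Rosser--Schoenfeld machinery together with the numerical RH verification available at the time (which already controls $\theta(t)-t$ sharply enough near $t=e^{19}$); this is only a historical remark and does not affect the validity of your argument as a self-contained verification of the inequality the paper needs.
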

Then we have that for all $0<t<1$,
\begin{multline}\label{diff-psi-theta}
\left( \psi(e^u X (1+\d t)) - \theta(e^uX(1+\d t)) \right)
- \left( \psi( X(1+\d t))-\theta( X(1+\d t)) \right)
\\ 
<
\sqrt{X} \sqrt{1+\d}\left(
1.001e^{u/2}-0.999+X^{-1/6}(1+\d)^{-1/6}(e^{u/3}-1)
\right)
<\omega \sqrt{X},
\end{multline}
where we can take, under our assumptions \eqref{cond},
\begin{equation}\label{def-omega}
\omega= 2.05022\cdot10^{-3}.
\end{equation}
We denote
\begin{equation}\label{def-J}
J_{\d,u,X} = \frac1{\|f\|_1}\int_{0}^{1} \left(  \psi(e^uX(1+\d t) )-\psi(X(1+\d t) )\right) f(t) dt  .
\end{equation}
It follows from \eqref{diff-psi-theta} that
\begin{equation}\label{ineq-IJ}
I_{\d,u,X} \ge J_{\d,u,X} -\omega \sqrt{X}. 
\end{equation}
Note that \cite{RaSa} used older approximations from \cite{Sh}, which lead to $\omega =0.0325  $.
To summarize, we want to find conditions on $m,\d,u,a$ so that
\begin{equation}\label{ineq-5}
J_{\d,u,X} -\omega  \sqrt{X} -  2(1+\d )(e^u-1)\frac{\log(e^uX(1+\d ))}{\log(X(e^u-1))} \frac{ \nu(f,a) }{\|f\|_1} X >0 .
\end{equation}
We are now left with evaluating $J_{\d,u,X}$, which we shall do by relating it to the zeros of zeta through an explicit formula.
\subsection{An explicit inequality for $J_{\d,u,X}$}
\begin{lemma} \cite[Lemma 4]{RaSa} 
Let $2\le b\le c$, and let $g$ be a continuously differentiable function on $[b,c]$. We have 
\begin{multline}
 \int_b^c\psi(u)g(u)du=\int_b^cug(u)-\sum_{\varrho}\int_b^c\frac{u^{\varrho}}{\varrho}g(u)du
\\+\int_b^c\left(\log2\pi -\frac{1}{2}\log(1-u^{-2})\right)g(u)du.
\end{multline}
\end{lemma}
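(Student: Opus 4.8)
The plan is to obtain this identity by integrating the Riemann--von Mangoldt explicit formula for $\psi$ against the weight $g$. Recall that, with $\psi_{0}(u)=\tfrac12\bigl(\psi(u^{-})+\psi(u^{+})\bigr)$ (which differs from $\psi(u)$ only at prime powers), one has the pointwise formula
\[
\psi_{0}(u)=u-\sum_{\varrho}\frac{u^{\varrho}}{\varrho}+\log 2\pi-\tfrac12\log\bigl(1-u^{-2}\bigr)\qquad(u>1),
\]
the sum over the nontrivial zeros $\varrho$ of $\zeta$ being a symmetric limit over $|\Im\varrho|\le T$. Since $g$ is bounded on $[b,c]$ and the prime powers form a set of Lebesgue measure zero, we may replace $\psi_{0}$ by $\psi$ under the integral sign; it then suffices to multiply the displayed formula by $g(u)$ and integrate term by term over $[b,c]$. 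The $u$-term yields $\int_{b}^{c}u\,g(u)\,du$, the nontrivial zeros yield $-\sum_{\varrho}\int_{b}^{c}\varrho^{-1}u^{\varrho}g(u)\,du$, and the last two terms of the formula yield $\int_{b}^{c}\bigl(\log 2\pi-\tfrac12\log(1-u^{-2})\bigr)g(u)\,du$; the hypothesis $b\ge 2$ keeps $u$ away from the singularity of $\log(1-u^{-2})$ at $u=1$ and makes the trivial-zero series $\sum_{k\ge1}u^{-2k}/(2k)$ converge uniformly on $[b,c]$, so its term-by-term integration is harmless.

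The one delicate point---and the step I expect to be the main obstacle---is legitimising the term-by-term integration of the sum over the nontrivial zeros, which is only conditionally convergent. I would proceed in two moves. First, writing $S_{T}(u)=\sum_{|\Im\varrho|\le T}\varrho^{-1}u^{\varrho}$, the truncated explicit formula (with its effective error term) provides a bound $|S_{T}(u)|\le h(u)$ holding uniformly in $T$ for $u\in[b,c]$, with $h$ integrable on $[b,c]$; dominated convergence then lets me take the symmetric limit outside $\int_{b}^{c}S_{T}(u)g(u)\,du$. Second, on the resulting series $\sum_{\varrho}\int_{b}^{c}\varrho^{-1}u^{\varrho}g(u)\,du$ I would integrate by parts once---legitimate because $g\in C^{1}[b,c]$---via
\[
\int_{b}^{c}\frac{u^{\varrho}}{\varrho}g(u)\,du=\Bigl[\frac{u^{\varrho+1}}{\varrho(\varrho+1)}g(u)\Bigr]_{b}^{c}-\int_{b}^{c}\frac{u^{\varrho+1}}{\varrho(\varrho+1)}g'(u)\,du,
\]
making each term $O(|\varrho|^{-2})$; since $\sum_{\varrho}|\varrho|^{-2}<\infty$, the series converges absolutely, its sum is order-independent, and every interchange above is retroactively justified.

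If one prefers to avoid the conditional convergence entirely, an equivalent and cleaner route is to integrate by parts \emph{first}: with $\Psi(u)=\int_{2}^{u}\psi(t)\,dt$ we have $\int_{b}^{c}\psi(u)g(u)\,du=\Psi(c)g(c)-\Psi(b)g(b)-\int_{b}^{c}\Psi(u)g'(u)\,du$, and one may now insert directly the \emph{second} explicit formula for $\Psi$, whose zero-sum $\sum_{\varrho}u^{\varrho+1}/(\varrho(\varrho+1))$ is already absolutely convergent, the constant and trivial-zero bookkeeping being identical to that above. Either way, collecting the contributions gives exactly the stated identity.
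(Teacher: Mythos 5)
The paper does not give its own proof of this lemma---it is quoted verbatim from Ramar\'e and Saouter \cite[Lemma 4]{RaSa}---so there is no internal argument to compare against. Your derivation is correct. Integrating the Riemann--von Mangoldt explicit formula for $\psi_0$ against $g$, replacing $\psi_0$ by $\psi$ (harmless since they differ only on a null set), and handling the constant and trivial-zero terms is routine given $b\ge2$. The only delicate point is the interchange of the symmetric limit over zeros with the integral, and both of the routes you sketch close that gap: dominated convergence via the uniform error bound from the truncated explicit formula, followed by integration by parts to exhibit absolute convergence of $\sum_\varrho \varrho^{-1}\int_b^c u^\varrho g(u)\,du$; or, more economically, integrating by parts first and invoking the explicit formula for $\int_2^u\psi(t)\,dt$, whose zero-sum $\sum_\varrho u^{\varrho+1}/(\varrho(\varrho+1))$ already converges absolutely. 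The second route is the cleaner and more standard one, and it explains why the hypothesis $g\in C^1[b,c]$ appears in the statement at all.
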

We apply this identity to respectively $g(t) = f\left(\d^{-1}\left(e^{-u}X^{-1}t-1\right)\right)$, 
$b=e^u X$, $c= e^u X(1+\d)$ and  $g(t) = f\left(\d^{-1}\left(X^{-1}t-1\right)\right), b=X, c= X(1+\d)$. 
It follows that
\begin{multline*}
J_{\d,u,X} = 
\frac{\left(e^u-1\right)X}{\|f\|_1}  \int_0^1 (1+\d t) f(t) dt
- \frac1{\|f\|_1}  \sum_{\varrho}  \int_0^1  \frac{\left(e^{u\varrho}-1\right)X^{\varrho}(1+\d t)^{\varrho}  f(t)}{\varrho} dt
\\ -  \frac1{2\|f\|_1} \int_0^1 \left( \log \left( 1-\left(e^uX\left(1+\d t\right)\right)^{-2}\right) - \log \left( 1-\left(X\left(1+\d t\right)\right)^{-2}\right) \right) f(t) dt .
\end{multline*}
Observe that the last term is $  \ge - \frac{u}{2X} $.
 We obtain
  \begin{multline}\label{expl-ineq0}
\frac{J_{\d,u,X}}{(e^u-1)X}
\ge \frac{\int_0^1(1+\d t)f(t)dt}{\|f\|_1}
 - \sum_{\varrho}  \left| \frac{ \left(e^{u\varrho}-1\right) }{(e^u-1)\varrho} \frac{\int_0^1  (1+\d t)^{\varrho}  f(t) dt}{\|f\|_1}\right|  X^{\Re \varrho-1} 
\\-\frac{u}{2(e^u-1)X^2}.   
 \end{multline}
We obtain some small savings by directly computing the first term whereas 
\cite[equation (13)]{RaSa} use the following bound in \eqref{expl-ineq0} instead:
\[
\frac{  \int_0^1 (1+\d  t) f(t) dt}{\|f\|_1}
\ge \frac{u}{e^u-1}.
\]
Let $s$ be a complex number.
We denote $G_{m,\d,u}(s)$ the summand
\begin{equation}\label{def-G}
 G_{m,\d,u}(s) =  \frac{ \left(e^{us}-1\right) }{(e^u-1)s} \frac{\int_0^1  (1+\d t)^{s}  f(t) dt}{\|f\|_1},
\end{equation}
and we rewrite inequality \eqref{expl-ineq0} as
\begin{equation}\label{expl-ineq}
\frac{J_{\d,u,X}}{(e^u-1)X}\ge
G_{m,\d,u}(1) - \sum_{\varrho}\left| G_{m,\d,u}(\varrho)\right| X^{\Re \varrho-1}
-\frac{u}{2(e^u-1)}X^{-2}. 
\end{equation}
Since the right term increases with $X$, we can replace $X$ with $X_0$ for $X\ge X_0$.
Note that this is also the case for the other left term for
\[ \frac{\omega}{(e^u-1)\sqrt{X}} -  2(1+\d )\frac{\log(e^uX(1+\d ))}{\log(X(e^u-1))} \frac{ \nu(f,a) }{\|f\|_1} .\]
For simplicity we denote
\begin{equation}\label{def-Sigma}
   \Sigma  =  \Sigma_{m,\d,u,X}  = \sum_{\varrho=\beta+i\gamma} \left| G_{m,\d,u}(\varrho)\right| X^{\beta-1}.
\end{equation}
The following Proposition gives a first inequality in terms of the zeros of zeta and conditions on 
$m,u,\d,a$ (and thus $\Delta$) so that $\theta(x) - \theta(x(1-\Delta^{-1})) >0$:
\begin{prop}\label{condition}
Let $m,u,\d,a, \Delta, X_0$ satisfy \eqref{cond}.
If $X\ge X_0$ and
\begin{multline}\label{ineq-6}
G_{m,\d,u}(1) 
- \Sigma_{m,\d,u,X_0} 
-\frac{u}{2(e^u-1)}X_0^{-2} 
- \frac{\omega  }{(e^u-1)} X_0^{-1/2}
\\ -   \frac{ 2\nu(f,a)(1+\d) }{\|f\|_1}  \frac{\log(e^uX_0(1+\d ))}{\log(X_0(e^u-1))}
>0 ,
\end{multline}
then there exists a prime number between $x(1-\Delta^{-1})$ and $x$.
\end{prop}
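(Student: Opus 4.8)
The plan is to chain together the inequalities already prepared in this section and then remove the dependence on $X$ by a monotonicity argument. Recall that, as observed right after the introduction of parameters, there is a prime between $x(1-\Delta^{-1})$ and $x$ as soon as $\theta(x)-\theta\big(x(1-\Delta^{-1})\big)>0$; by the discussion leading to \eqref{ineq-4} this holds once \eqref{ineq-4} is true, and by \eqref{ineq-IJ} the inequality \eqref{ineq-4} is implied by \eqref{ineq-5}. Since $(e^u-1)X>0$ under \eqref{cond}, I would divide \eqref{ineq-5} by this quantity and insert the explicit lower bound \eqref{expl-ineq} for $J_{\d,u,X}/((e^u-1)X)$; with the notation \eqref{def-Sigma}, this shows it suffices to establish, for the given $X\ge X_0$,
\begin{equation*}
G_{m,\d,u}(1) - \Sigma_{m,\d,u,X} - \frac{u}{2(e^u-1)}X^{-2} - \frac{\omega}{e^u-1}X^{-1/2} - \frac{2(1+\d)\,\nu(f,a)}{\|f\|_1}\,\frac{\log(e^uX(1+\d))}{\log(X(e^u-1))} > 0 .
\end{equation*}

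Next I would show that the left-hand side of this inequality is nondecreasing in $X$ on $[X_0,\infty)$, so it is enough to check it at $X=X_0$, where it is exactly \eqref{ineq-6}. Term by term: $G_{m,\d,u}(1)$ does not involve $X$; each summand $|G_{m,\d,u}(\varrho)|X^{\beta-1}$ of $\Sigma_{m,\d,u,X}$ has exponent $\beta-1<0$, so $\Sigma_{m,\d,u,X}$ is nonincreasing in $X$ and $-\Sigma_{m,\d,u,X}$ is nondecreasing; the terms $-\tfrac{u}{2(e^u-1)}X^{-2}$ and $-\tfrac{\omega}{e^u-1}X^{-1/2}$ are visibly nondecreasing; and for the Brun--Titchmarsh term one writes $\frac{\log(e^uX(1+\d))}{\log(X(e^u-1))}=1+\frac{u+\log(1+\d)-\log(e^u-1)}{\log X+\log(e^u-1)}$, where the numerator $u+\log(1+\d)-\log(e^u-1)$ is positive because $0\le u,\d\le 0.0001$ makes $\log(e^u-1)$ very negative, and --- this being exactly the condition under which the Brun--Titchmarsh bound \eqref{ineq-3} is meaningful --- the denominator $\log X+\log(e^u-1)=\log(X(e^u-1))$ is positive and increasing for $X\ge X_0$; hence this ratio decreases toward $1$, and $-\tfrac{2(1+\d)\nu(f,a)}{\|f\|_1}\,\frac{\log(e^uX(1+\d))}{\log(X(e^u-1))}$ is nondecreasing as well. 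Adding these observations, the full left-hand side is nondecreasing in $X$, so its positivity at $X=X_0$, namely \eqref{ineq-6}, yields positivity for every $X\ge X_0$.

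It then remains only to read the chain in reverse: \eqref{ineq-6} gives the displayed inequality for every $X\ge X_0$; by \eqref{expl-ineq} and division by $(e^u-1)X$ this yields \eqref{ineq-5}; by \eqref{ineq-IJ} this gives \eqref{ineq-4}; and \eqref{ineq-4} is precisely the statement $\theta(x)-\theta\big(x(1-\Delta^{-1})\big)>0$, which forces a prime between $x(1-\Delta^{-1})$ and $x$. The proof is thus essentially bookkeeping; the only steps that are not purely formal are the sign verifications hiding in the monotonicity of the Brun--Titchmarsh log-ratio --- namely that $u+\log(1+\d)-\log(e^u-1)>0$ and $\log X_0+\log(e^u-1)>0$ under the constraints of \eqref{cond} (the latter being a standing assumption, automatic for the parameter ranges used in Section \ref{Computations}) --- and the fact that $\Sigma_{m,\d,u,X_0}$ is finite, which depends on the smoothing weight $f$ chosen in Section \ref{Computations} and is in any case presupposed by \eqref{ineq-6}. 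I expect these minor points to be the only ones requiring care.
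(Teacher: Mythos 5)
Your proposal is correct and follows exactly the path the paper itself takes: the Proposition is just a compilation of the chain \eqref{ineq-4} $\Leftarrow$ \eqref{ineq-IJ}/\eqref{ineq-5} $\Leftarrow$ \eqref{expl-ineq}, together with the remark (made just after \eqref{expl-ineq}) that the entire left-hand side is nondecreasing in $X$, so that evaluating at $X=X_0$ suffices. The only cosmetic slip is calling \eqref{ineq-4} ``precisely the statement $\theta(x)-\theta(x(1-\Delta^{-1}))>0$''; it is a sufficient condition for that positivity, not an equivalence, but this does not affect the argument.
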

We are now going to make this Lemma more explicit by providing computable bounds for the sum over the zeros $\Sigma_{m,\d,u,X_0} $.
\subsection{Evaluating $G_{m,\d,u}$.}\label{def-admissible}
Let $f$ be an $m$-admissible function over $[0,1]$.
We recall the properties it entitles according to the definition of \cite{RaSa}:
\begin{itemize}
\item $f$ is an $m$-times differentiable function,
\item $f^{(k)}(0)=f^{(k)}(1)=0$ for $0\le k\le m-1,$
\item $f\geq0,$
\item $f$ is not identically $0$.
\end{itemize}
Let $k=0,\ldots,m$, $s=\s+i\tau$ be a complex number with $\tau>0, 0\le \s \le 1$.
We denote
\begin{equation}\label{def-F}
 F_{k,m,\delta}=\frac{\int_0^1(1+\delta t)^{1+k}|f^{(k)}(t)|dt}{\|f\|_1}.
 \end{equation}
We provide here finer estimates than \cite{RaSa} for $G_{m,\d,u}$.
Observe that 
\begin{align}
 & \label{bnd-exp-1}
\left|\frac{ e^{us}-1 }{s}\right| = \left|\int_1^u e^{xs} dx\right| 
\le 
\int_1^u e^{x\s} dx = \frac{ e^{u\s}-1 }{\s} ,
\\& \label{bnd-exp-3}
\left|\frac{ e^{us}-1 }{s}\right| \le \frac{ e^{u\s}+1 }{\tau},\\
\text{and}\ 
& \label{bnd-f-3}
\left|\int_0^1  (1+\d t)^{s}  f(t) dt\right|
\le \frac{1}{\d^k \tau^k} F_{k,m,\delta}.
\end{align}
We deduce easily bounds for $G_{m,\d,u}(s)$ by combining \eqref{bnd-exp-1} and \eqref{bnd-f-3} with respectively 
$k=0$, 
$k=1$, 
$k=m$, 
and lastly by combining \eqref{bnd-exp-3} and \eqref{bnd-f-3} with $k=m$:
\begin{align}
 & \label{bnd-G-1}
\left|  G_{m,\d,u}(s )\right| \le 
F_{0,m,\d}  \frac{e^{u\s}-1}{(e^u-1)\s},
\\
& \label{bnd-G-5}
\left|  G_{m,\d,u}(s )\right| \le 
F_{1,m,\d} \frac{e^{u\s}-1}{(e^u-1)\s\d\tau} ,\\
 &\label{bnd-G-6}
\left|  G_{m,\d,u}(s )\right| \le 
F_{m,m,\d} \frac{e^{u\s}-1}{(e^u-1)\s\d^m \tau^m} ,\\
 & \label{bnd-G-2}
\left|  G_{m,\d,u}(s )\right| \le 
F_{m,m,\d} \frac{e^{u\s}+1}{(e^u-1)\d^m \tau^{m+1}} .
\end{align}
\subsection{Zeros of the Riemann-zeta function}
We denote each zero of zeta $\varrho = \beta + i \gamma$, $N(T)$ the number of zeros in the rectangle $0<\beta<1,0 < \gamma <T$,
and $N(\s_0,T)$ the number of those in the rectangle $\s_0<\beta <1,0 < \gamma <T$.
We assume that we have the following information.
\begin{theorem}\label{thm-info-zeros}\noindent
\begin{enumerate}
 \item A numerical verification of the Riemann Hypothesis:\\
There exists $H>2$ such that if $\zeta(\b+i\g)=0$ at $0\le \b \le1$ and $0\le \g \le H$, then $\b=1/2$.
\item A direct computation of some finite sums over the first zeros:\\
Let $0<T_0<H$ and $S_0>0$ satisfy 
\begin{align}
& \label{bnd-sum-T0-1}
\sum_{\begin{substack} { 0<\gamma \le T_0 \\ \beta=1/2} \end{substack}} 1\le N_0=N(T_0), \\
\text{and} \ & \label{bnd-sum-T0}
\sum_{\begin{substack} { 0<\gamma \le T_0 \\ \beta=1/2} \end{substack}} \frac1{\gamma} \le S_0.
\end{align}
\item A zero-free region:\\
There exists $R_0>0$ constant, such that $\zeta(\s+it)$ does not vanish in the region
\begin{equation}\label{zfr}
\s \ge 1-\frac1{R_0\log |t|}\ \text{ and }\ |t| \ge2.
\end{equation}
\item An estimate for $N(T)$:\\
There exist $a_1,a_2,a_3$ positive constants such that, for all $T\ge 2$,
\begin{equation}\label{Rosser}
\begin{split}
& |N(T) - P(T) | \le  R(T), 
\\ \text{where}\ 
& P(T)  = \frac T{2\pi}\log \frac T{2\pi} - \frac T{2\pi} + \frac{7}{8} ,\ 
R(T) = a_1 \log T + a_2 \log \log T + a_3 .
\end{split}
\end{equation}
\item An upper bound for $N(\s_0,T)$:\\
Let $3/5<\s_0<1$.
Then there exist $c_1,c_2,c_3$ constants such that, for all $T\ge H$,
\begin{equation}\label{density}
 N(\s_0,T)  \le c_1  T + c_2   \log T + c_3.
 \end{equation}
\end{enumerate}
\end{theorem}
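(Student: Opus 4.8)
The plan is to verify each of the five items (1)--(5) by assembling results from the literature together with a direct finite computation, since Theorem \ref{thm-info-zeros} collects the analytic inputs that will be fed into Proposition \ref{condition}. Items (1)--(4) are essentially off the shelf; the novelty, and the source of the improvement over \cite{RaSa}, lies in the explicit zero-density bound (5).

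For (1), I would invoke Platt's rigorous numerical verification of the Riemann Hypothesis \cite{Pla0}, which certifies that every zero $\beta+i\gamma$ with $0\le\gamma\le H$ satisfies $\beta=1/2$ for an explicit height $H$; one then takes $H$ as large as is convenient for the later optimization, subject to that bound. For (2), one fixes some $T_0<H$, so that every zero up to $T_0$ lies on the critical line; then $N_0=N(T_0)$ is obtained by a direct count (equivalently from $P(T_0)$ corrected by the argument principle), and $S_0$ by summing $1/\gamma$ over the tabulated low-lying zeros together with a rigorous estimate for the truncated tail. Both are finite computations.

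For (3), I would quote an explicit classical (de la Vall\'ee Poussin type) zero-free region supplying an admissible constant $R_0$ in \eqref{zfr}, as established by Kadiri and, more recently, by Mossinghoff and Trudgian. For (4), the bound \eqref{Rosser} on $|N(T)-P(T)|$ is Rosser's theorem in its refined explicit form (Rosser--Schoenfeld, Trudgian), from which one reads off an admissible triple $(a_1,a_2,a_3)$.

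The remaining item (5) is the main obstacle and the genuinely new ingredient. To obtain an explicit estimate of the form \eqref{density} with a constant $c_1$ small enough to beat \cite{RaSa} when $\s_0$ is taken moderately close to $1$, the plan is to make Ingham's density method fully effective (with Huxley's refinement if useful): detect the zeros with $\beta>\s_0$ by pairing the Dirichlet series of $\zeta$ against a mollifier of controlled length, bound the resulting second moment of a Dirichlet polynomial on the line $\Re s=\s_0$ via an explicit mean-value (large sieve) inequality combined with an explicit approximate functional equation and a Hal\'asz--Montgomery large-values bound, and then optimize the mollifier length. The constants $c_1,c_2,c_3$ emerge from the explicit versions of each of these lemmas; the delicate part is keeping every error term tight enough that the gain over the Schoenfeld and Ramar\'e--Saouter inputs survives all the way through to Table \ref{Table}.
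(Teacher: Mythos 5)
Your reading of items (1)--(4) matches the paper's approach exactly: Platt's verification for (1), a direct finite computation over Odlyzko's list for (2), Kadiri's zero-free region for (3), and Rosser's theorem (with Trudgian as an alternative) for (4). One small point: since $T_0<H$ is fixed, the sum defining $S_0$ is already finite, so there is no ``truncated tail'' to estimate, only rounding of the tabulated ordinates.

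The divergence is in item (5), and in how the statement itself is positioned. In the paper, Theorem~\ref{thm-info-zeros} is not proved at all; it is stated as a list of hypotheses (``We assume that we have the following information'') and later instantiated in Theorem~\ref{thm-info-zeros-cts} by citation. In particular, the zero-density bound \eqref{density} is taken as a black box from the first author's separate paper \cite{Kad2}, with the constants $c_1,c_2,c_3$ read off from Table~\ref{table1}. You instead propose to re-derive that bound from scratch by making Ingham's density method (with Huxley's refinement, mollifiers, an explicit mean-value inequality, and a Hal\'asz--Montgomery large-values bound) fully effective. That is the right family of ideas and roughly the territory of \cite{Kad2}, but as written it is a plan for an entire research article, not a step in this one; the present paper deliberately avoids this work by citing the existing explicit result. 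So your proposal is correct in where (1)--(4) come from, but for (5) it substitutes an ambitious, unexecuted re-derivation for a simple appeal to \cite{Kad2}, and it does not flag that the theorem is a conditional framework rather than a claim the paper itself establishes.
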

Note that \cite{RaSa} did not use any information of the type \eqref{bnd-sum-T0}, \eqref{zfr}, or \eqref{density}. 
Instead they used \eqref{bnd-sum-T0-1}, the fact that all nontrivial zeros satisfied $\b<1$, and the classical bound \eqref{Rosser} for $N(T)$ as given in \cite{Ros}[Theorem 19]. 
Our improvement will mainly come from using a new zero-density of the form of \eqref{density}. 
\subsection{Evaluating the sum over the zeros $ \Sigma_{m,\d,u,X_0}$.}
We assume Theorem \ref{thm-info-zeros}. 
We split the sum $ \Sigma_{m,\d,u,X_0}$ vertically at heights $\g=0$ (so as to use the symmetry with respect to the $x$-axis) and consider
\[ 
\tilde{G}_{m,\d,u}(\b+i\g) = |G_{m,\d,u}(\b+i\g)| + |G_{m,\d,u}(\b-i\g)|.
\] 
We then split at $\g=H$ (so as to take advantage of the fact that all zeros below this horizontal line satisfy $\b=1/2$), and again at $\g=T_0$ and $\g=T_1$ (where $T_1$ will be chosen between $T_0$ and $H$), and consider:
\begin{align}
&\label{def-S0}   \Sigma_0 = \sum_{0<\gamma \le T_0}  \tilde{G}_{m,\d,u}(1/2+i\g) X_0^{-1/2},\\
 &   \Sigma_1 = \sum_{T_0<\gamma \le T_1}  \tilde{G}_{m,\d,u}(1/2+i\g) X_0^{-1/2},\\
 \text{ and }\  &  \Sigma_2 = \sum_{T_1<\gamma \le H}   \tilde{G}_{m,\d,u}(1/2+i\g) X_0^{-1/2}.
 \end{align}
For the remaining zeros (those with $\g>H$), we make use of the symmetry with respect to the critical line, and we split at $\b=\s_0$ for some fixed $\s_0>1/2$ (we will consider $9/10\le \s_0\le 99/100$ for our computations). 
We denote
\begin{align}
 \nonumber \Sigma_3 = & 
\sum_{\begin{substack} { \gamma>H \\ \beta=1/2} \end{substack}} \tilde{G}_{m,\d,u}(1/2+i\g) X_0^{-1/2}
\\ & + \sum_{\begin{substack} { \gamma>H \\1/2 < \beta\le \s_0} \end{substack}}  \left( \tilde{G}_{m,\d,u}(\b+i\g) X_0^{\b-1} +\tilde{G}_{m,\d,u}(1-\b+i\g) X_0^{-\b} \right),\\
  \Sigma_4  = & \sum_{ \begin{substack} { \gamma>H \\ \s_0<\beta<1}\end{substack}}\left( \tilde{G}_{m,\d,u}(\b+i\g) X_0^{\b-1} +\tilde{G}_{m,\d,u}(1-\b+i\g) X_0^{-\b} \right).
\end{align}
As a conclusion, we have 
\begin{equation}\label{eq-Sigma}
 \Sigma_{m,\d,u,X_0} =   \Sigma_0 +   \Sigma_1 +   \Sigma_2 +   \Sigma_3 +   \Sigma_4.
\end{equation}
We state here some preliminary results (see \cite[equations (2.18), (2.19), (2.20), (2.21), (2.26)]{FaKa}). 
\begin{lemma} 
Let $T_0,H,R_0,\s_0$ be as in Theorem \ref{thm-info-zeros}.
Let $m\ge 2, X_0>10$, and $T_1$ between $T_0$ and $H$.
We define
\begin{align}
& \label{def-S1}
S_1(T_1)
= \left(\frac{1}{2\pi}+q(T_0)\right) \left(\log\frac{T_1}{T_0}\log\frac{\sqrt{T_1T_0}}{2\pi}\right) \frac{2R(T_0)}{T_0}, \\
& \label{def-S2}
S_2(m,T_1)=\left(\frac{1}{2\pi}+q(T_1)\right) \left(\frac{1+m\log\frac{T_1}{2\pi}}{m^2T_1^{m}}-\frac{1+m\log\frac{H}{2\pi}}{m^2H^{m}}\right)
+\frac{2R(T_1)}{T_1^{m+1}}, \\
& \label{def-S3}
 S_3(m)=\left(\frac{1}{2\pi}+q(H)\right)\left(\frac{1+m\log\frac{H}{2\pi})}{m^2H^m}\right)+\frac{2R(H)}{H^{m+1}},
 \\
& \label{def-S4}
S_4(m,\s_0)=\Big(c_1\Big(1+\frac{1}{m} \Big) +\frac{c_2\log H}{H}+ \Big(c_3+\frac{c_2}{m+1}\Big)\frac{1}{H}\Big)\frac{1}{H^m},
\\
& \label{def-S5}
S_5(X_0,m,\s_0)=\Big(c_1+\frac{c_2 \log H}{H}+\frac{c_3}{H}+\Big(c_1+\frac{c_2}{H}\Big) \frac{R_0}{2\log X_0}\frac{(\log H)^2}{(\frac{mR_0}{\log X_0})(\log H)^2-1}\Big)\frac{1}{H^m}.
\end{align}
We assume Theorem \ref{thm-info-zeros}. Then
\begin{align}
& \label{bnd-sum-T1-A}
\sum_{T_0<\gamma \le T_1}\frac{1}{\gamma} \le S_1(T_1),\\
& \label{bnd-sum-H}
\sum_{T_1<\gamma\le H}\frac{1}{\gamma^{m+1}} \le S_2(m,T_1), \\
& \label{bnd-sum-s0}
\sum_{\gamma >H}\frac{1}{\gamma^{m+1}} \le S_3(m),\\
& \label{bnd-sum-1-s0}
\sum_{\begin{substack}{\gamma >H \\ \s_0 <\beta <1}\end{substack}}\frac{1}{\gamma^{m+1}}  \le S_4(m,\s_0).
\end{align}
Moreover, if $\log X_0 <R_0m(\log H)^2$, then
\begin{equation}
\label{bnd-sum-X_0}
 \sum_{\begin{substack}{\gamma >H \\ \s_0 <\beta <1}\end{substack}}\frac{X_0^{\frac{-1}{R_0 \log \gamma}}}{\gamma^{m+1}} \le S_5(X_0,m,\s_0)X_0^{\frac{-1}{R_0 \log H}}.
\end{equation}
\end{lemma}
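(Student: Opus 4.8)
\emph{Proof sketch.} All five inequalities are obtained by Riemann--Stieltjes integration by parts against the zero-counting functions, fed by the estimates of Theorem~\ref{thm-info-zeros}. Throughout I write $N(t)=P(t)+E(t)$ with $|E(t)|\le R(t)$ by \eqref{Rosser}, recall $P'(t)=\frac1{2\pi}\log\frac t{2\pi}$, and let $q$ denote the explicit local-density majorant of \cite{FaKa}, which I use to control the contribution of the error $E$.

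For \eqref{bnd-sum-T1-A} I begin with $\sum_{T_0<\g\le T_1}\g^{-1}=\int_{T_0}^{T_1}t^{-1}\,dN(t)$ and split $dN=dP+dE$. The $dP$-part equals $\frac1{2\pi}\int_{T_0}^{T_1}\frac{\log(t/2\pi)}{t}\,dt=\frac1{2\pi}\log\frac{T_1}{T_0}\log\frac{\sqrt{T_1T_0}}{2\pi}$, the leading part of $S_1$; integrating the $dE$-part by parts and estimating it via \eqref{Rosser} enlarges this constant from $\frac1{2\pi}$ to $\frac1{2\pi}+q(T_0)$ and leaves the residual boundary term $\frac{2R(T_0)}{T_0}$. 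The identical computation with the weight $t^{-(m+1)}$ on the range $(T_1,H]$, respectively $(H,\infty)$, gives \eqref{bnd-sum-H} and \eqref{bnd-sum-s0}: the leading term is $\frac1{2\pi}\int\frac{\log(t/2\pi)}{t^{m+1}}\,dt$, which equals $\frac1{2\pi}\big(\frac{1+m\log(T_1/2\pi)}{m^2T_1^m}-\frac{1+m\log(H/2\pi)}{m^2H^m}\big)$ over $(T_1,H]$ and $\frac1{2\pi}\cdot\frac{1+m\log(H/2\pi)}{m^2H^m}$ over $(H,\infty)$, while convergence at infinity and the vanishing of the boundary term there use only $E(t)=O(\log t)$ and $m\ge 2$.

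For \eqref{bnd-sum-1-s0} I run the same argument with $N(\s_0,t)$ in place of $N(t)$ and the density estimate \eqref{density}. Since every zero up to height $H$ lies on the critical line, $N(\s_0,t)=0$ for $t<H$, so integration by parts gives $\sum_{\g>H,\,\s_0<\b<1}\g^{-m-1}\le -N(\s_0,H)H^{-m-1}+(m+1)\int_H^\infty N(\s_0,t)\,t^{-m-2}\,dt$; dropping the nonpositive boundary term and inserting $N(\s_0,t)\le c_1t+c_2\log t+c_3$ leaves the elementary integrals $\int_H^\infty t^{-m-1}\,dt$, $\int_H^\infty(\log t)\,t^{-m-2}\,dt$, $\int_H^\infty t^{-m-2}\,dt$, whose values combine to exactly $S_4(m,\s_0)$. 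The same integration by parts with $w(t)=X_0^{-1/(R_0\log t)}\,t^{-(m+1)}$ gives \eqref{bnd-sum-X_0}: one first checks $w'(t)<0$ for $t\ge H$, which holds precisely because $\frac{\log X_0}{R_0(\log t)^2}\le\frac{\log X_0}{R_0(\log H)^2}<m$ under the hypothesis $\log X_0<R_0m(\log H)^2$, and hence $\sum_{\g>H,\,\s_0<\b<1}\frac{X_0^{-1/(R_0\log\g)}}{\g^{m+1}}\le w(H)\big(c_1H+c_2\log H+c_3\big)+c_1\int_H^\infty w(t)\,dt+c_2\int_H^\infty w(t)\,t^{-1}\,dt$. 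The boundary term $w(H)(c_1H+c_2\log H+c_3)$ is the first block of $S_5$; for the two integrals I substitute $t=e^v$ and pull out $X_0^{-1/(R_0\log H)}$, after which the leftover exponent $\frac{\log X_0}{R_0}\big(\frac1{\log H}-\frac1v\big)$ is, by convexity of $v\mapsto 1/v$, at most $\frac{\log X_0}{R_0(\log H)^2}(v-\log H)$; since this coefficient is $<m$, the remaining exponential integral converges and is bounded by a fixed multiple of $\frac{H^{-m}}{m-\log X_0/(R_0(\log H)^2)}$ times $X_0^{-1/(R_0\log H)}$, and collecting constants (using also $t^{-1}\le H^{-1}$ in the second integral) gives the second block of $S_5$.

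The two steps where I expect care to be needed are: (i) the bookkeeping of $E$ in \eqref{bnd-sum-T1-A}--\eqref{bnd-sum-s0}, namely verifying that its contribution really separates into $q(\cdot)$ times the same main integral plus the single residual term $2R(\cdot)/(\cdot)^{m+1}$ — this is exactly where the precise choice of $q$ from \cite{FaKa} is used; and (ii) the exponential integral in \eqref{bnd-sum-X_0}, where the hypothesis $\log X_0<R_0m(\log H)^2$ is precisely what makes it converge with the saving $X_0^{-1/(R_0\log H)}$ factored out in front. Everything else is elementary calculus; these five estimates are \cite[equations (2.18)--(2.21) and (2.26)]{FaKa}.
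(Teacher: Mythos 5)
The paper does not actually supply a proof of this lemma: it is imported verbatim from \cite[equations (2.18)--(2.21), (2.26)]{FaKa}, and in particular the function $q$ appearing in $S_1, S_2, S_3$ is never defined in the present paper. Your proposal therefore does more than the paper does, and the strategy --- Riemann--Stieltjes integration by parts with $dN = dP + dE$, $|E|\le R$, and the density bound $N(\s_0,t)\le c_1 t + c_2\log t + c_3$ --- is exactly the right one. Your derivation of \eqref{bnd-sum-1-s0} in particular is complete and correct: $\sum_{\g>H}\g^{-m-1}\le(m+1)\int_H^\infty N(\s_0,t)t^{-m-2}\,dt$ after discarding the nonpositive boundary term, and the three elementary integrals recombine to give $S_4(m,\s_0)$ exactly. (You also read \eqref{def-S1} correctly as additive, i.e.\ $(\frac1{2\pi}+q(T_0))(\cdots)+\frac{2R(T_0)}{T_0}$; the paper's typesetting drops the $+$ sign, but the expansion of $S_1$ used in Section~\ref{B01vsB41} confirms your reading.)

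Two genuine gaps remain. First, for \eqref{bnd-sum-T1-A}--\eqref{bnd-sum-s0} your argument reduces to the claim that the $dE$-contribution is majorized by $q(\cdot)$ times the main integral plus a single $2R/(\cdot)$ boundary term; this is precisely where the unspecified $q$ does all the work, and without its definition (and without checking, e.g., that $R(t)/t$ decreases so that the two endpoint terms $R(T_0)/T_0+R(T_1)/T_1$ can be collapsed to $2R(T_0)/T_0$) the inequalities cannot be verified. You flag this yourself, but it should be stated that $q$ must be chosen so that $\int R(t)\,t^{-2}\,dt\le q(T_0)\cdot(\text{main term})$, and analogously with the weight $t^{-(m+1)}$ for $S_2,S_3$; otherwise the reader cannot close the argument. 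Second, your bound for \eqref{bnd-sum-X_0} does not give $S_5$: with $A=\log X_0/R_0$, $L=\log H$, the convexity step $\frac1L-\frac1v\le\frac{v-L}{L^2}$ yields $\int_H^\infty w(t)\,dt\le\frac{L^2}{mL^2-A}H^{-m}X_0^{-1/(R_0 L)}$, whereas the second block of $S_5$ carries the coefficient $\frac{L^2}{2(mL^2-A)}$, smaller by a factor of two. Your phrase ``a fixed multiple'' hides this; as written, the argument proves a bound that is \emph{twice} the stated $S_5$-term, so either a sharper estimate of $\int_H^\infty e^{-A/v-mv}\,dv$ is needed, or the factor $\frac12$ in \eqref{def-S5} is in fact an error carried over from \cite{FaKa} --- this should be resolved explicitly rather than absorbed into an unspecified constant.
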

\begin{lemma}\label{lemma-bnd-Sigma}
Let $m,\d,X_0$ satisfy \eqref{cond}.
We assume Theorem \ref{thm-info-zeros}.
If $\log X_0 <R_0m(\log H)^2$, then
\begin{multline}\label{bnd-Sigma}
 \Sigma_{m,\d,u,X_0} \le 
 B_0(m,\delta)X_0^{-1/2}
+ B_1(m,\delta,T_1)X_0^{-1/2}
+ B_2(m,\d,T_1)X_0^{-1/2}
\\ + B_{3}(m,\d)\left(X_0^{\s_0-1}+X_0^{-\s_0}\right)
+ B_{41}(X_0,m,\d,\s_0) X_0^{-\frac{1}{R_0\log(H)}} 
\\+ B_{42}(m,\d,\s_0) X_0^{-1+\frac{1}{R_0\log H}},
\end{multline}
where the $B_i$'s are defined in \eqref{def-B0}, \eqref{def-B1}, \eqref{def-B2}, \eqref{def-B3},\eqref{def-B41}, and \eqref{def-B42}.
\end{lemma}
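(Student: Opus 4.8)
The plan is to estimate each of the five pieces in the decomposition \eqref{eq-Sigma} separately. For a zero $\varrho=\b+i\g$ in a given regime I would substitute for $\tilde G_{m,\d,u}(\varrho)$ whichever of \eqref{bnd-G-1}--\eqref{bnd-G-2} is sharpest there, use the monotonicity of $\s\mapsto(e^{u\s}\pm1)/\s$ on $(0,1]$ to replace the $\s$-dependence by its extreme value on the relevant vertical strip --- so that each bound factors as an explicit constant in $(m,\d,u)$ times a pure power of $\g$ --- bound the accompanying $X_0$-power $X_0^{\b-1}$ (and, for a reflected zero, $X_0^{-\b}$) by its largest value on the strip, and then apply the matching sum-over-zeros estimate from the previous Lemma. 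Adding the five contributions then gives \eqref{bnd-Sigma}.

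In detail: for $\Sigma_0$ and $\Sigma_1$ the ordinates are small, so only one power of $\g$ is affordable in the denominator; I would use \eqref{bnd-G-5} (the case $k=1$) together with \eqref{bnd-sum-T0} and \eqref{bnd-sum-T1-A} to handle $\sum 1/\g$, which yields the terms $B_0(m,\d)X_0^{-1/2}$ (with $B_0$ proportional to $S_0$) and $B_1(m,\d,T_1)X_0^{-1/2}$ (with $B_1$ proportional to $S_1(T_1)$). For $\Sigma_2$ and for the critical-line part of $\Sigma_3$ (zeros with $\g>H$, $\b=1/2$) the full $m+1$ powers of $\g$ are available, so I would use \eqref{bnd-G-2} (the case $k=m$) with \eqref{bnd-sum-H} and \eqref{bnd-sum-s0}, obtaining $B_2(m,\d,T_1)X_0^{-1/2}$ and part of the $B_3$ term. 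For the off-line part of $\Sigma_3$ ($\g>H$, $1/2<\b\le\s_0$) I would again apply \eqref{bnd-G-2} and \eqref{bnd-sum-s0}, now bounding $X_0^{\b-1}$ and $X_0^{-\b}$ by their extreme values on the strip, which completes $B_3(m,\d)\bigl(X_0^{\s_0-1}+X_0^{-\s_0}\bigr)$.

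The delicate piece is $\Sigma_4$, the zeros with $\g>H$ and $\s_0<\b<1$. Here I would invoke the zero-free region \eqref{zfr}, which forces $\b<1-1/(R_0\log\g)$, so that the direct contribution obeys $X_0^{\b-1}\le X_0^{-1/(R_0\log\g)}$ and the reflected one obeys $X_0^{-\b}\le X_0^{-1+1/(R_0\log\g)}$; combining \eqref{bnd-G-2} with the weighted density sum \eqref{bnd-sum-X_0} then produces the last two terms $B_{41}(X_0,m,\d,\s_0)X_0^{-1/(R_0\log H)}$ and $B_{42}(m,\d,\s_0)X_0^{-1+1/(R_0\log H)}$. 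The hard part is precisely this step: \eqref{bnd-sum-X_0} is only available under the hypothesis $\log X_0<R_0m(\log H)^2$ (otherwise the sum $\sum_{\g>H,\s_0<\b<1}X_0^{-1/(R_0\log\g)}/\g^{m+1}$ need not reduce to an explicit constant), which is exactly why that inequality appears among the assumptions of the Lemma; making the interplay of the density estimate \eqref{density} and the zero-free region \eqref{zfr} fully effective is the only genuine obstacle, the remainder being the bookkeeping needed to match the constants with the explicit formulas \eqref{def-B0}--\eqref{def-B42}.
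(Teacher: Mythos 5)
Your strategy coincides with the paper's: estimate each of $\Sigma_0,\dots,\Sigma_4$ separately by choosing the sharpest $G$-bound for that range of heights, pair it with the matching zero-sum estimate, bound the $X_0$-powers by monotonicity in $\b$, and sum. Your treatment of $\Sigma_2$, $\Sigma_3$, and $\Sigma_4$ agrees with the paper's, including the observation that for $\Sigma_4$ the zero-free region \eqref{zfr} together with the hypothesis $\log X_0 < R_0 m(\log H)^2$ is exactly what makes \eqref{bnd-sum-X_0} applicable.

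The omission is in $\Sigma_0$ and $\Sigma_1$. You bound these only via \eqref{bnd-G-5} paired with $\sum 1/\gamma$ (through \eqref{bnd-sum-T0} and \eqref{bnd-sum-T1-A}), and you state that $B_0$ is proportional to $S_0$ and $B_1$ to $S_1(T_1)$. But the definitions \eqref{def-B0} and \eqref{def-B1} set $B_0$ and $B_1$ to be the \emph{minima} of two candidate bounds: the one you derive ($\Sigma_{01}$, $\Sigma_{11}$), and an alternative using the cruder $G$-bound \eqref{bnd-G-1} together with direct zero counts $N_0$ from \eqref{bnd-sum-T0-1} and $N(T_1)-N_0$ from \eqref{Rosser}, giving $\Sigma_{02}$, $\Sigma_{12}$. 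Proving $\Sigma_0\le B_0 X_0^{-1/2}$ requires establishing both candidates, since $B_0$ is the smaller of the two; your argument yields only $\Sigma_0\le\Sigma_{01}X_0^{-1/2}$, which is weaker whenever $\Sigma_{02}<\Sigma_{01}$ --- and Section~\ref{B01vsB41} shows this is what in fact happens for the parameters of Table~\ref{Table}. The fix is short (derive $\Sigma_{02}$, $\Sigma_{12}$ and take minima) but it is a missing step, not bookkeeping. A further minor slip: the two $\Sigma_4$ terms arise from \emph{two} distinct estimates, \eqref{bnd-sum-X_0} for $B_{41}$ and \eqref{bnd-sum-1-s0} for $B_{42}$; you attribute both to \eqref{bnd-sum-X_0}.
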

\begin{proof}
We investigate two ways to evaluate $\Sigma_0$ and $\Sigma_1$.
For $\Sigma_0$, we can either combine \eqref{bnd-G-5} with \eqref{bnd-sum-T0} which computes $\sum_{ 0<\gamma \le T_0 } \gamma^{-1}$, 
or \eqref{bnd-G-1} with \eqref{bnd-sum-T0-1} which computes $\sum_{ 0<\gamma \le T_0 } 1$.
We denote 
\begin{equation}
 \label{def-B0} 
B_0(m,\d)= \min( \Sigma_{01}(m,\d), \Sigma_{02}(m,\d)),
\end{equation}
with
\begin{equation}\label{def-S01-S02}
\Sigma_{01}(m,\d)=\frac{4F_{1,m,\delta} }{(e^{u/2}+1)\d}S_0 
\ \text{ and }\ 
\Sigma_{02}(m,\d)=\frac{4F_{0,m,\delta}}{(e^{u/2}+1)}N_0 .
\end{equation}
We obtain
\begin{equation}\label{bnd-S0} 
 \Sigma_0 \le B_0(m,\d)  X_0^{-1/2}.
\end{equation}
For $\Sigma_1$, we can either combine \eqref{bnd-G-5} with the bound \eqref{bnd-sum-T1-A} for $\sum_{T_0<\gamma \le T_1} \gamma^{-1}$,  
or \eqref{bnd-G-1} with the bound \eqref{Rosser} for $N(T)$ from Theorem  \ref{thm-info-zeros}. 
We denote
\begin{equation}\label{def-B1} 
B_1(m,\d,T_1) = \min( \Sigma_{11}(m,\d,T_1), \Sigma_{12}(m,\d,T_1)), 
\end{equation}
with
\begin{equation}\label{def-S11-S12}
\Sigma_{11}(m,\d)= \frac{4F_{1,m,\delta}}{(e^{u/2}+1)\d}S_1(T_1) ,
\ \text{ and }\ 
\Sigma_{12}(m,\d)= \frac{4F_{0,m,\delta}}{e^{u/2}+1}(N(T_1)-N_0) .
\end{equation}
We obtain 
\begin{equation}\label{bnd-S1} 
 \Sigma_1 \le B_1(m,\d,T_1) X_0^{-1/2}  .
\end{equation}
It follows from \eqref{bnd-G-2} and \eqref{bnd-sum-H} that
\begin{equation}\label{bnd-S2} 
  \Sigma_2  
\le B_2(m,\d,T_1)X_0^{-1/2} , 
\end{equation}
with 
\begin{equation}\label{def-B2}
B_2(m,\d,T_1)=\frac{2F_{m,m,\delta}}{(e^{u/2}-1)\delta^{m}}S_2(m,T_1).
\end{equation}
 We use \eqref{bnd-G-2} to bound $\tilde{G}$ in $  \Sigma_3$:
 \[
  \Sigma_ 3
 \le  \frac{2F_{m,m,\d}}{(e^u-1)\d^m} 
 \sum_{ \substack{\gamma>H \\ 1/2 \le \b \le \s_0}} \frac{ (e^{u\b}+1) X_0^{\b-1} + (e^{u(1-\b)}+1)X_0^{-\b}}{\gamma^{m+1}}.
 \]
Note that since $ \log X_0 > u $, then $(e^{u\b}+1)X_0^{\b-1}+(e^{u(1-\b)}+1)X_0^{-\b}$ increases with $\b\ge 1/2$. 
 Moreover, we use \eqref{bnd-sum-s0} to bound the sum $\sum_{ \substack{\gamma>H \\ \b \ge 1/2}} \gamma^{-(m+1)}$, and obtain
 \begin{equation}
 \label{bnd-S3}
 \Sigma_ 3
  \le B_{3}(m,\d,\s_0) X_0^{\s_0-1} + B_{3}(m,\d,1-\s_0) X_0^{-\s_0},
  \end{equation}
where
 \begin{equation}
 \label{def-B3}
 B_{3}(m,\d,\s)=\frac{2F_{m,m,\d}}{\d^m}\frac{e^{u\s}+1}{e^u-1} S_3(m).
 \end{equation}
For $  \Sigma_4$ we use again \eqref{bnd-G-2} to bound $\tilde{G}$ and the fact that $X_0^{\b-1}+X_0^{-\b}$ increases with $\b$. Since $\b\le 1-\frac{1}{R_0\log \g}$ and $\g > H$ we obtain
\[
  \Sigma_4
\le \frac{2(e^{u}+1)F_{m,m,\d}}{(e^u-1)\d^m} 
\Big( \sum_{\substack{\gamma>H \\ \s_0<\b<1}}\frac{X_0^{-\frac{1}{R_0\log \gamma}}}{\gamma^{m+1}}+ X_0^{-1+\frac{1}{R_0\log H}}\sum_{\substack{\gamma>H \\ \s_0<\b<1}}\frac{1}{\gamma^{m+1}} \Big).
\]
We apply \eqref{bnd-sum-1-s0} and \eqref{bnd-sum-X_0} to bound the above sums over the zeros and obtain
\begin{equation} \label{bnd-S4}
 \Sigma_4
\le B_{41}(X_0,m,\d,\s_0) X_0^{-\frac{1}{R_0\log(H)}} + B_{42}(m,\d,\s_0) X_0^{-1+\frac{1}{R_0\log H}},
\end{equation}
with
\begin{align} 
& \label{def-B41}
B_{41}(X_0,m,\d,\s_0)= \frac{2(e^{u}+1)F_{m,m,\d}}{(e^u-1)\d^m}  S_5(X_0,m,\s_0),\\ 
& \label{def-B42}
B_{42}(X_0,m,\d,\s_0)= \frac{2(e^{u}+1)F_{m,m,\d}}{(e^u-1)\d^m}  S_4(m,\s_0).  
\end{align}
\end{proof}
Note that $G_{m,\d,u}(1)=F_{0,m,\delta}$.
Finally we apply Proposition \ref{condition} and Lemma \ref{lemma-bnd-Sigma}.
\subsection{Main Theorem.}
\begin{theorem}\label{1st-main-thm}
Let $m,u,\d,a, \Delta, X_0$, and $x$ satisfy \eqref{cond}.
Let $T_0,H,R_0,\s_0$ be as in Theorem \ref{thm-info-zeros}.
We assume Theorem \ref{thm-info-zeros}. 
If $X\ge X_0$ and
\begin{multline}\label{to-solve}
 F_{0,m,\delta}
-B_0(m,\delta)X_0^{-1/2}
- B_1(m,\delta,T_1)X_0^{-1/2}
- B_2(m,\d,T_1)X_0^{-1/2}
\\- B_{3}(m,\d,\s_0) X_0^{\s_0-1} 
 - B_{3}(m,\d,1-\s_0) X_0^{-\s_0}
- B_{41}(X_0,m,\d,\s_0) X_0^{-\frac{1}{R_0\log H }} 
\\- B_{42}(m,\d,\s_0) X_0^{-1+\frac{1}{R_0\log H}}
-\frac{u}{2(e^u-1)}X_0^{-2} 
- \frac{\omega  }{(e^u-1)} X_0^{-1/2}
\\-   \frac{ 2\nu(f,a)(1+\d) }{\|f\|_1}  \frac{\log(e^uX_0(1+\d ))}{\log(X_0(e^u-1))} >0,
\end{multline}
then there exists a prime number between $x(1-\Delta^{-1})$ and $x$.
\end{theorem}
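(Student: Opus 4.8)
The plan is to assemble the pieces already prepared. Proposition~\ref{condition} reduces the existence of a prime in $(x(1-\Delta^{-1}),x)$ to the single inequality \eqref{ineq-6}, and Lemma~\ref{lemma-bnd-Sigma} supplies an explicit upper bound for the sum over zeros $\Sigma_{m,\d,u,X_0}$ that occurs in \eqref{ineq-6}. So I would argue in three moves: (i) replace $G_{m,\d,u}(1)$ in \eqref{ineq-6} by $F_{0,m,\d}$, using that evaluating \eqref{def-G} at $s=1$ gives $1\cdot\frac{\int_0^1(1+\d t)f(t)\,dt}{\|f\|_1}$, which equals $F_{0,m,\d}$ by \eqref{def-F} since $f\ge 0$; (ii) bound $-\Sigma_{m,\d,u,X_0}$ from below by the negative of the right-hand side of \eqref{bnd-Sigma}; and (iii) observe that what remains is exactly \eqref{to-solve}, so that \eqref{to-solve} implies \eqref{ineq-6} and Proposition~\ref{condition} then yields the conclusion.

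In more detail: first I would record the side hypothesis $\log X_0<R_0 m(\log H)^2$ required to invoke \eqref{bnd-sum-X_0} inside Lemma~\ref{lemma-bnd-Sigma}; under \eqref{cond} and the sizes of $R_0,m,H$ coming from Theorem~\ref{thm-info-zeros} this holds. Then, since every $B_i\ge 0$ and every power of $X_0$ occurring is positive, Lemma~\ref{lemma-bnd-Sigma} gives
\[
-\Sigma_{m,\d,u,X_0}\ \ge\ -(B_0+B_1+B_2)X_0^{-1/2}-B_3(m,\d,\s_0)X_0^{\s_0-1}-B_3(m,\d,1-\s_0)X_0^{-\s_0}-B_{41}X_0^{-1/(R_0\log H)}-B_{42}X_0^{-1+1/(R_0\log H)},
\]
using the refined constants $B_3(m,\d,\cdot)$ from the proof of that lemma. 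Substituting this lower bound together with $G_{m,\d,u}(1)=F_{0,m,\d}$ into \eqref{ineq-6} produces precisely the left-hand side of \eqref{to-solve}; hence \eqref{to-solve} $\Rightarrow$ \eqref{ineq-6}, and an appeal to Proposition~\ref{condition} finishes the argument.

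I do not expect a serious analytic obstacle here, since all the hard work sits in Proposition~\ref{condition} and Lemma~\ref{lemma-bnd-Sigma}; the delicate points are organizational: confirming the side condition $\log X_0<R_0 m(\log H)^2$, taking the correct minimum in $B_0$ and $B_1$ of the two admissible estimates (the one via $\sum 1/\gamma$ versus the one via $N(T)$), and invoking the monotonicity in $X$ already noted after \eqref{expl-ineq} and after Proposition~\ref{condition}, which is what allows a single inequality checked at $X=X_0$ to cover every $X\ge X_0$.
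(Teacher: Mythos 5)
Your proposal is correct and follows essentially the same route as the paper, which simply notes that $G_{m,\d,u}(1)=F_{0,m,\delta}$ and then appeals to Proposition~\ref{condition} together with Lemma~\ref{lemma-bnd-Sigma}. You fill in the (short) details accurately, and you are right to flag the implicit hypothesis $\log X_0<R_0 m(\log H)^2$ and to use the refined $B_3(m,\d,\s_0)$ and $B_3(m,\d,1-\s_0)$ appearing in \eqref{bnd-S3} and in \eqref{to-solve}, rather than the slightly abbreviated $B_3(m,\d)$ notation of \eqref{bnd-Sigma}.
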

\section{Computations.}\label{Computations}
\subsection{Introducing the Smooth Weight $f$}\label{Legendre}
We choose the same weight as \cite{RaSa}, that is 
\[
f_m(t)=(4t(1-t))^m \ \text{ if }\ 0\le t\le 1, \text{ and } 0 \text{ otherwise}.
\]
We proved in \cite{FaKa} that a primitive of $f_m$ was providing a close to optimum weight to estimate $\psi(x)$. Thus we believe that the above weight should also be close to optimal to evaluate $\psi(y)-\psi(x)$ when $y$ is close to $x$.
We recall \cite[Lemma 6]{RaSa}:
\begin{align}
& \label{bnd-f01}
\|f_m\|_1 = \frac{2^{2m}(m!)^2}{(2m+1)!},\\
& \label{bnd-fm2}
 \|f_m^{(m)}\|_2 = \frac{2^{2m}m!}{\sqrt{2m+1}}.
\end{align}
We now provide estimates for $F_{k,m,\d}$ as defined in \eqref{def-F}.  
\begin{lemma}\label{Fk-Estimates}
Let $m\ge2,\d>0$, and $0<\s<1$.We define
 \begin{align*}
 & \lambda_0(m,\d)=\frac{ (2m+1)!}{2^{2m-1}(m!)^2} ,\\ 
 & \lambda_1(m,\d)=\frac{(1+\d)^{2}(2m+1)!}{2^{2m-1}(m!)^2},\\ 
 & \lambda(m,\d)=\sqrt{\frac{(1+\d)^{2m+3}-1}{\d(2m+3)}}\frac{(2m+1)!}{m!\sqrt{2m+1}}.
\end{align*}
Then 
 \begin{align}
  &\label{bnd-F0}
1 \le F_{0,m,\d}\le 1+ \d, \\
&\label{bnd-F1}
 \lambda_0(m,\d) \le F_{1,m,\d}(\s)\le \lambda_1(m,\d), \\
&\label{bnd-Fm}
F_{m,m,\d}(\s) \le \lambda(m,\d).
 \end{align}
\end{lemma}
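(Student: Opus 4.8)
The plan is to handle the three cases $k=0$, $k=1$, and $k=m$ separately, in each case reducing $\int_0^1(1+\d t)^{1+k}|f_m^{(k)}(t)|\,dt$ to the quantities $\|f_m\|_1$ and $\|f_m^{(m)}\|_2$ already evaluated in \eqref{bnd-f01} and \eqref{bnd-fm2}. The case $k=0$ is immediate from the definition \eqref{def-F}: since $f_m\ge 0$ and $1\le 1+\d t\le 1+\d$ on $[0,1]$, one has $\|f_m\|_1\le\int_0^1(1+\d t)f_m(t)\,dt\le(1+\d)\|f_m\|_1$, and dividing through by $\|f_m\|_1$ gives \eqref{bnd-F0}.

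For $k=1$ the key observation is that $f_m'(t)=4^m m\,t^{m-1}(1-t)^{m-1}(1-2t)$ is nonnegative on $[0,\tfrac12]$ and nonpositive on $[\tfrac12,1]$, so that
\[
\int_0^1|f_m'(t)|\,dt=\int_0^{1/2}f_m'(t)\,dt-\int_{1/2}^1 f_m'(t)\,dt=2f_m(\tfrac12)-f_m(0)-f_m(1)=2 .
\]
Sandwiching $(1+\d t)^2$ between $1$ and $(1+\d)^2$ then yields $2\le\int_0^1(1+\d t)^2|f_m'(t)|\,dt\le 2(1+\d)^2$; dividing by $\|f_m\|_1=2^{2m}(m!)^2/(2m+1)!$ gives exactly $\lambda_0(m,\d)\le F_{1,m,\d}\le\lambda_1(m,\d)$, which is \eqref{bnd-F1}.

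For $k=m$ I would apply the Cauchy--Schwarz inequality so as to bring in the already-computed $L^2$ norm:
\[
\int_0^1(1+\d t)^{m+1}|f_m^{(m)}(t)|\,dt\le\Bigl(\int_0^1(1+\d t)^{2m+2}\,dt\Bigr)^{1/2}\,\|f_m^{(m)}\|_2 .
\]
The first factor is elementary, $\int_0^1(1+\d t)^{2m+2}\,dt=\bigl((1+\d)^{2m+3}-1\bigr)/\bigl(\d(2m+3)\bigr)$, while $\|f_m^{(m)}\|_2=2^{2m}m!/\sqrt{2m+1}$ by \eqref{bnd-fm2}; dividing by $\|f_m\|_1$ and cancelling the powers of $2$ and the factorials produces $\lambda(m,\d)$, i.e. \eqref{bnd-Fm}.

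None of the three steps is a serious obstacle. The only points needing care are the sign analysis of $f_m'$ (which makes the $k=1$ bound sharp rather than merely an inequality) and the choice to estimate the $k=m$ integral by Cauchy--Schwarz against $\|f_m^{(m)}\|_2$: using instead the crude bound $(1+\d t)^{m+1}\le(1+\d)^{m+1}$ together with $\|f_m^{(m)}\|_1$ would be wasteful and would not reproduce $\lambda(m,\d)$. I expect the genuinely delicate estimation to occur later, when these $F_{k,m,\d}$ bounds are substituted into the constants $B_i$ and Theorem \ref{1st-main-thm} is optimized numerically.
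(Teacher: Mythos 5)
Your proof is correct and follows essentially the same route as the paper: the trivial sandwich for $k=0$, the sign analysis of $f_m'$ giving $\|f_m'\|_1=2$ for $k=1$, and Cauchy--Schwarz against $\|f_m^{(m)}\|_2$ for $k=m$. The only cosmetic difference is that you write out $f_m'$ explicitly, whereas the paper simply observes that $f_m'$ has the same sign as $1-2t$.
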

\begin{proof}
Inequalities \eqref{bnd-F0} follow trivially from the fact $1\le (1+\d t)\le 1+\d$.

To bound $F_{1,m,\d}$, we note that 
\[
\frac{ \|f_m'\|_1}{\|f_m\|_1} \le F_{1,m,\d} \le \frac{(1+\d)^2\|f_m'\|_1}{\|f_m\|_1}.
\]
Since $f_m'(t)$ has same sign as $1-2t$, we have
\[
\|f_m'\|_1 
=\int_1^{1/2}f_m'(t)dt-\int_{1/2}^1f_m'(t)dt
=2f_m(1/2)-f_m(0)-f_m(1)
=2.
\]
This together with \eqref{bnd-f01} achieves to prove \eqref{bnd-F1}.

Lastly, for $F_{m.m,\d}$, we apply \eqref{bnd-fm2} together with the Cauchy-Schwarz inequality:
\[
F_{m,m,\d}(\s)\le\frac{\sqrt{\int_0^1 (1+\d t)^{2(m+1)} dt } \sqrt{\int_0^1|f_m^{(m)}(t)|^2dt} }{\|f_m\|_1}
=\sqrt{\frac{(1+\d)^{2m+3}-1}{\d(2m+3)}}\frac{\|f_m^{(m)}\|_2}{\|f_m\|_1}.
\]
\end{proof}
Note that while $F_{0,m,\d}$ and  $F_{1,m,\d}$ can be easily computed as integrals, it is not the case for $F_{m,m,\d}$.
The following observation helps us to compute $F_{m,m,\d}$ directly.
We recognize in the definition of $f^{(m)}_m$ the analogue of Rodrigues' formula for the shifted Legendre polynomials:
\[
f^{(m)}_m(t)=4^m m!P_m(1-2t),
\]
where $P_m(x)$ is the $m^{th}$ Legendre polynomial, and
\[
P_m(1-2t) = (-1)^m \sum_{k=0}^m {m \choose k} {m+k \choose k} (-t)^{k}.
\]
For each each $P_m(1-2t)$, we denote $r_{j,m}$, with $j=0,\ldots,m$, its $m+1$ roots. 
Since $P_m(1-2t)$ alternates sign between each of them, we have
\begin{multline*}
 F_{m,m,\d} 
= \frac{\int_0^1 (1+\delta t)^{m+1}|P_m(1-2t)|dt}{\|f\|_1}
\\= \frac{1}{\|f\|_1}\sum_{j=0}^{m-1} (-1)^j  \int_{r_j}^{r_{j+1}} (1+\delta t)^{m+1} P_m(1-2t)dt ,
\end{multline*}
and GP-Pari is able to compute quickly this sum of polynomial integrals.
\subsection{Explicit results about the zeros of the Riemann zeta function}\label{preliminary-lemmas}
We provide here the latest values for the constants appearing in Theorem \ref{thm-info-zeros}:
\begin{theorem}\label{thm-info-zeros-cts}
\noindent
\begin{enumerate}
 \item A numerical verification of the Riemann Hypothesis (Platt \cite{Pla0}):
\[
H = 3.061\cdot 10^{10}.
\]
\item A direct computation of some finite sums over the first zeros \\(using A. Odlyzko's list of zeros):
\[
\text{For}\ T_0=1\,132\,491,
\ N_0=N(T_0)=2\,001\,052,
\ \text{and}\ S_0= 11.637732363.
\]
\item A zero-free region (Kadiri \cite[Theorem 1.1]{Kad}):
\[R_0=5.69693.\]
\item An estimate for $N(T)$ (Rosser \cite[Theorem 19]{Ros}):
\[a_1 = 0.137, \ a_2 =0.443, \ a_3 = 1.588.\]
\newline
\item An upper bound for $N(\s_0,T)$ (Kadiri \cite{Kad2}):
For all $T\ge H$, 
\[
N(\s,T)\le c_1 T + c_2 \log T + c_3,
\]
where the $c_i$'s are given in Table \ref{table1}.
\begin{table}[ht]
\centering
\caption{ $N(\s,T) \le c_1  T + c_2  \log T + c_3.$} 
\label{table1}
\begin{tabular}{|r|r|r|r|r|r|}
\hline
$\s$ & $c_1 $ & $c_2 $ & $c_3 $
\\
\hline
$0.90$ & $5.8494$ & $0.4659$ & $-1.7905\cdot10^{11}$ \\ \hline
$0.91$ & $5.6991$ & $0.4539$ & $-1.7444\cdot10^{11}$ \\ \hline
$0.92$ & $5.5564$ & $0.4426$ & $-1.7007\cdot10^{11}$ \\ \hline
$0.93$ & $5.4206$ & $0.4318$ & $-1.6592\cdot10^{11}$ \\ \hline
$0.94$ & $5.2913$ & $0.4215$ & $-1.6196\cdot10^{11}$ \\ \hline
$0.95$ & $5.1680$ & $0.4116$ & $-1.5819\cdot10^{11}$ \\ \hline
$0.96$ & $5.0503$ & $0.4023$ & $-1.5458\cdot10^{11}$ \\ \hline
$0.97$ & $4.9379$ & $0.3933$ & $-1.5114\cdot10^{11}$ \\ \hline
$0.98$ & $4.8304$ & $0.3848$ & $-1.4785\cdot10^{11}$ \\ \hline
$0.99$ & $4.7274$ & $0.3766$ & $-1.4470\cdot10^{11}$ \\ \hline
\end{tabular}
\end{table}
\end{enumerate}
\end{theorem}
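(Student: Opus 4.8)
The plan is to assemble Theorem \ref{thm-info-zeros-cts} by collecting, for each of the five items, the sharpest currently available explicit result in the literature, supplemented by one direct numerical computation for item (2). There is no new analytic input required: the task is to quote the relevant theorems and to check that the stated numerical constants are admissible in the precise sense demanded by Theorem \ref{thm-info-zeros}.

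Items (1), (3), (4), (5) are pure citation. For (1), I would invoke Platt's rigorous verification of the Riemann Hypothesis \cite{Pla0}: every zero $\b + i\g$ of $\zeta$ with $0 \le \b \le 1$ and $0 \le \g \le H$ has $\b = 1/2$, where $H = 3.061\cdot10^{10}$. For (3), I would invoke the explicit zero-free region of Kadiri \cite[Theorem 1.1]{Kad}, which is precisely \eqref{zfr} with $R_0 = 5.69693$. For (4), I would invoke Rosser \cite[Theorem 19]{Ros}, which gives \eqref{Rosser} with $a_1 = 0.137$, $a_2 = 0.443$, $a_3 = 1.588$, valid for all $T \ge 2$, exactly the range needed. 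For (5), I would invoke the explicit zero-density estimate of Kadiri \cite{Kad2}, which furnishes \eqref{density} for $T \ge H$ and $3/5 < \s_0 < 1$; evaluating that bound at $\s_0 = 0.90, 0.91, \ldots, 0.99$ yields the ten triples $(c_1, c_2, c_3)$ displayed in Table \ref{table1}. The large negative values of $c_3$ are harmless: they merely reflect that the estimate is asserted only for $T \ge H$, where the linear term $c_1 T$ dominates.

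The single item requiring computation is (2). I would take A. Odlyzko's high-precision tables of the low-lying ordinates $\g$ of the zeros $\tfrac12 + i\g$ of $\zeta$, fix the truncation height $T_0 = 1\,132\,491$ (chosen below $H$ and positioned so that $N(T_0)$ is unambiguous), read off by direct enumeration that $N_0 := N(T_0) = 2\,001\,052$ --- so that \eqref{bnd-sum-T0-1} in fact holds with equality --- and then evaluate the finite sum $\sum_{0 < \g \le T_0} \g^{-1}$ over these roughly two million terms in rigorously controlled (e.g.\ interval) arithmetic, rounding upward to obtain the admissible constant $S_0 = 11.637732363$ of \eqref{bnd-sum-T0}. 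The main obstacle, and the only non-bookkeeping point in the whole statement, is to certify that the zero list used is complete up to $T_0$ (no ordinate omitted; this is checked by the usual count of sign changes of the Hardy function on $[0, T_0]$ against $P(T_0)$) and that every rounding in the summation is taken in the safe direction, so that the stated $S_0$ is a genuine upper bound. With those precautions the five assertions hold; it remains only to note the trivial structural consistency checks that $2 < T_0 < H$, $S_0 > 0$, and $3/5 < \s_0 < 1$ for each tabulated $\s_0$, as required for the downstream use of Theorem \ref{thm-info-zeros}.
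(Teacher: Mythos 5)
Your proposal matches the paper's treatment exactly: items (1), (3), (4), (5) are handled by citing Platt, Kadiri's zero-free region, Rosser, and Kadiri's zero-density respectively, and item (2) is a direct numerical summation over A.~Odlyzko's list of zeros up to $T_0$. The paper in fact gives no further justification beyond these citations, so your added remarks about safe rounding and completeness of the zero list are only extra care, not a divergence in method.
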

Note that \cite[Theorem 19]{Ros} was recently improved by T. Trudgian in \cite[Corollary 1]{Tru2} with $a_1=0.111,\ a_2=0.275,\ a_3=2.450$.
Our results are valid with either Rosser's or Trudgian's bounds.
\subsection{Understanding the contribution of the low lying zeros}\label{B01vsB41}
We assume Theorem \ref{thm-info-zeros-cts} and that
\begin{equation}\label{cond-mdt}
m\ge m_0=5, \d < \d_0=2\cdot10^{-8}, \text{ and } T_1>t_1=10^9 
\end{equation}
(this would be consistent with the values we choose in Table \ref{Table}).
We observe that 
\[
B_0(m,\d) = \Sigma_{02} \  \text{and} \ B_1(m,\d,T_1) = \Sigma_{12}.
\] 
where $\Sigma_{02}$ and $\Sigma_{12}$ are defined in \eqref{def-S01-S02} and \eqref{def-S11-S12} respectively.
In other words, it turns out that we obtain a smaller bound for the sum over the small zeros ($0<\gamma<T$) by using $N(T)$ directly instead of evaluating $\sum_{0<\gamma<T} \gamma^{-1}$.
This essentially comes from the fact that our choice of parameters insures us with $\d \ll \frac{F_{1,m,\d} S_0}{F_{0,m,\d} N_0}$ and $\d \ll \frac{F_{1,m,\d} S_1(T_1)}{F_{0,m,\d} (N(T_1)-N_0)}$.
We first prove the inequality
\begin{equation}
\frac{S_1(t)}{N(t)} \ge c_0\frac{ \log t}{ t}. 
\end{equation}
\begin{proof}
We denote
\begin{align*}
& w_1 =\frac{1}{2}\left(\frac{1}{2\pi}+q(T_0)\right)=0.0795\ldots ,\ 
 w_2 = -\log(2\pi)\left(\frac{1}{2\pi}+q(T_0)\right)=-0.2925\ldots,\\ 
 &w_3 = \left(\frac{1}{2\pi}+q(T_0)\right)\left(\frac{-\log^2(T_0)}{2}+\log(T_0)\log(2\pi)\right)+\frac{2R(T_0)}{T_0}=-11.3860\ldots,\\ 
& v_1 =\frac{1}{2\pi}=0.1591\ldots ,\ 
v_2 =\frac{-\log(2\pi)}{2\pi}-1 =-1.2925\ldots,\ 
v_3 = a_1=0.137,\ 
\\&v_4 =  a_2=0.443,\
v_5= a_3+\frac{7}{8}=2.463.
\end{align*}
and
\[
S_1(t) = w_1 (\log t)^2 + w_2 \log t + w_3,\
P(t) + R(t) = v_1 t\log t  + v_2 t + v_3 \log t + v_4\log \log t+ v_5 .
\]
We have from \eqref{def-S1} and Theorem \ref{thm-info-zeros-cts} (d) that
\[\frac{S_1(t)}{N(t)}\ge \frac{S_1(t)}{P(t)+R(t)} = \frac{ w_1 (\log t)^2 + w_2 \log t + w_3}{ v_1 t\log t  + v_2 t + v_3 \log t + v_4\log \log t+ v_5}. \]
Since $t>t_1=10^9$, we deduce the bound
\begin{equation}\label{eq1}
\frac{S_1(t)}{N(t)} 
\ge c_0\frac{ \log t }{ t}, 
\end{equation}
where
\begin{equation}\label{def-c0}
c_0 =  \frac{w_1+ \frac{w_2} {\log t_1} + \frac{w_3} {(\log t_1)^2} }{ v_1 + \frac{v_3} {t_1} +\frac{v_4 \log\log t_1} {t_1\log t_1}+\frac{v_5} {t_1\log t_1} } \ge 0.7508. 
\end{equation}
\end{proof}
We now establish that $\Sigma_{01}+  \Sigma_{11}, \Sigma_{01}+  \Sigma_{12}$, and $\Sigma_{02}+  \Sigma_{11}$ are all larger than $\Sigma_{02}+  \Sigma_{12}$.
We make use of Lemma \ref{Fk-Estimates} to provide estimates for the $F_{k,m,\d}$'s, of \eqref{eq1}, and of the assumptions \eqref{cond-mdt} on $m,\d,T_1$.
\begin{proof}
We have
\begin{multline*}
( \Sigma_{01}+  \Sigma_{11}) -(  \Sigma_{02}+  \Sigma_{12})
= \frac{4}{e^{u/2}+1}  \left( \frac{F_{1,m,\delta}}{\d} \left( S_0 +  S_1(T_1) \right) -  F_{0,m,\delta} N(T_1) \right)
\\
 > \frac{4(1+\d) N(T_1)}{e^{u/2}+1}  \left( \frac{ (2m_0+1)!}{2^{2m_0-1}(m_0!)^2 } \frac1{\d_0(1+\d_0) } \left(\frac{ S_0 }{P(t_1)+R(t_1)} + c_0 \frac{\log t_1}{t_1}  \right)- 1 \right) >0,
\end{multline*}
since the right term between brackets is $>2.4796-1>0$.
We have
\begin{multline*}
   (\Sigma_{01}+  \Sigma_{12}) - (  \Sigma_{02}+  \Sigma_{12}) 
= \left( \frac{S_0}{ \d}F_{1,m,\delta} - N_0 F_{0,m,\delta} \right) \frac{4}{e^{u/2}+1}
\\
>  \frac{4(1+\d) N_0}{e^{u/2}+1} \left( \frac{ (2m_0+1)!}{2^{2m_0-1}(m_0!)^2} \frac1{ \d_0(1+\d_0) }\frac{S_0}{N_0} - 1  \right) >0
\end{multline*}
since the right term between brackets is $>1574-1$.
Finally,
\begin{multline*}
   (\Sigma_{02}+  \Sigma_{11}) - (  \Sigma_{02}+  \Sigma_{12})
=\frac{4}{e^{u/2}+1} \left(\frac{F_{1,m,\delta}}{\d}S_1(T_1)- F_{0,m,\delta} (N(T_1)-N_0)\right)
\\
> \frac{4(1+\d)(N(T_1)-N_0)}{e^{u/2}+1} \left(\frac{ (2m_0+1)!}{2^{2m_0-1}(m_0!)^2}\frac{1}{\d_0(1+\d_0)}\frac{S_1(t_1)}{(\frac{S(t_1)t_1}{c_0\log t_1}-N_0)}-1 \right)
>0
\end{multline*}
since the right term between brackets is $>1.3737-1$.
\end{proof}
The values for $T_1$ and $a$ given in the next table are rounded down to the last digit.
\begin{table}[ht!]
\centering
\caption{
For all $x\ge x_0$, there exists a prime between $x(1-\Delta^{-1})$ and $x$.}
\label{Table}
\begin{tabular}{@{}r r r r r r r@{}}
\toprule
$\log x_0$ &  $m $ & $\d$ &  $T_1$ & $\s_0$ & $a$ & $\Delta$ 
\\
\midrule
$\log(4\cdot 10^{18})$&$5$&$3.580\cdot10^{-8}$&$272~519~712$&$0.92$&$0.2129$&$36~082~898$
\\
$43$&$5$&$3.349\cdot10^{-8}$&$291~316~980$&$0.92$&$0.2147$&$38~753~947$
\\
$44$&$6$&$2.330\cdot10^{-8}$&$488~509~984$&$0.92$&$0.2324$&$61~162~616$
\\
$45$&$7$&$1.628\cdot10^{-8}$&$797~398~875$&$0.92$&$0.2494$&$95~381~241$
\\
 $46$ & $8$   &$1.134\cdot10^{-8}$   &$1~284~120~197$   &$0.92$   & $ 0.2651$  & $148~306~019$ 
\\ 
$47$ & $9$ & $8.080\cdot10^{-9}$ & $ 1~996~029~891$ & $0.92$ & $0.2836$ & $227~619~375$
\\
$48$ &$11$&$6.000\cdot10^{-9}$&$3~204~848~430$&$0.93$&$0.3050$&$346~582~570$
\\
$49$ &$15$&$4.682\cdot10^{-9}$&$ 5~415~123~831$&$0.93$&$0.3275$&$518~958~776$
\\
$50$ &$20$&$3.889\cdot10^{-9}$&$ 8~466~793~105$&$0.93$&$0.3543$&$753~575~355$
\\
$51$ &$28$&$3.625\cdot10^{-9}$&$12~399~463~961$&$0.93$&$0.3849$&$1~037~917~449$
\\
$52$ &$39$&$3.803\cdot10^{-9}$&$16~139~006~408$&$0.93$&$0.4127$&$1~313~524~036$
\\
$53$ &$48$&$4.088\cdot10^{-9}$&$18~290~358~817$&$0.93$&$0.4301$&$1~524~171~138$
\\
$54$ &$54$&$4.311\cdot10^{-9}$&$19~412~056~863$&$0.93$&$0.4398$&$1~670~398~039$
\\
$55$ &$56$&$4.386\cdot10^{-9}$&$19~757~119~193$&$0.93$&$0.4445$&$1~770~251~249$
\\
$56$ &$59$&$4.508\cdot10^{-9}$&$20~210~075~547$&$0.93$&$0.4481$&$1~838~818~070$
\\
$57$ &$59$&$4.506\cdot10^{-9}$&$20~219~045~843$&$0.93$&$0.4496$&$1~886~389~443$
\\
$58$ &$61$&$4.590\cdot10^{-9}$&$20~495~459~359$&$0.93$&$0.4514$&$1~920~768~795$
\\
$59$ &$61$&$4.589\cdot10^{-9}$&$20~499~925~573$&$0.93$&$0.4522$&$1~946~282~821$
\\
$60$ &$61$&$4.588\cdot10^{-9}$&$20~504~393~735$&$0.93$&$0.4527$&$1~966~196~911$
\\
$150$ &$64$&$4.685\cdot10^{-9}$&$21~029~543~983$&$0.96$&$0.4641$&$2~442~159~714$
\\
\bottomrule
\end{tabular}
\end{table}

($\log(4\cdot 10^{18})=42.8328\ldots$.)
\subsection{Verification of the Ternary Goldbach conjecture}\label{GB}
\begin{proof}[Proof of Corollary \ref{Goldbach}]
Let $N=4\cdot10^{18}$. 
We follow Oliveira e Silva, Herzog and Pardi \cite{SilHerPar}'s argument where the authors computed all the prime gaps up to $4\cdot10^{18}$. 
From Table \ref{Table}, we have that for $x = e^{60}$ and $\Delta=1~966~090~061$, there exists at least one prime in the interval $(x-x/\Delta,x]$. This one has length $5.8082\cdot10^{16}$. 
Then $N\Delta=7.8647\cdot10^{27}$ and we may infer that the gap between consecutive primes up to $N\Delta$ can be no larger than $N$ (since $N\Delta/\Delta=N$). 
The corollary follows by using all the odd primes up to $N\Delta$ to extend the minimal Goldbach partitions of $4,6,\ldots,N$ up to $N\Delta$ (the method of computation is explained in \cite[Section 1]{SilHerPar}). 
We also note that $N+2=211+(N-209)$ and $N+4=313+(N-309)$, where $211,313,N-209,$ and $N-309$ are all prime. 
Thus, there is at least one way to write each odd number greater than $5$ and  smaller than $N\Delta$ as the sum of at most $3$ primes.   
\end{proof}
\end{document}